\patchcmd{\subsection}{-.5em}{.5em}{}{}
\patchcmd{\subsubsection}{-.5em}{.5em}{}{}
\numberwithin{equation}{section}
\newcommand{\SL}{\operatorname{SL}}
\newcommand{\cA}{\mathcal{A}}
\newcommand{\cC}{\mathcal{C}}
\newcommand{\cK}{\mathcal{K}}
\newcommand{\cN}{\mathcal{N}}
\newcommand{\cP}{\mathcal{P}}
\newcommand{\cQ}{\mathcal{Q}}
\newcommand{\cR}{\mathcal{R}}
\newcommand{\bF}{\mathbb{F}}
\newcommand{\bR}{\mathbb{R}}
\newcommand{\bZ}{\mathbb{Z}}
\newcommand{\ra}{\rightarrow}
\newcommand{\qen}{\enskip \textrm{and} \enskip}
\newcommand{\qand}{\quad \textrm{and} \quad}
\def\acts{\curvearrowright}
\newcommand\subsetsim{\mathrel{%
\ooalign{\raise0.2ex\hbox{$\subset$}\cr\hidewidth\raise-0.8ex\hbox{\scalebox{0.9}{$\sim$}}\hidewidth\cr}}}
\newcommand{\eps}{\varepsilon}
\DeclareMathOperator{\cum}{cum}
\theoremstyle{theorem}
\newtheorem{theorem}{Theorem}[section]
\newtheorem{corollary}[theorem]{Corollary}
\newtheorem{proposition}[theorem]{Proposition}
\newtheorem{lemma}[theorem]{Lemma}
\theoremstyle{definition}
\newtheorem{definition}[theorem]{Definition}
\newtheorem{remark}[theorem]{Remark}
\tikzstyle{decision} = [diamond, draw, fill=blue!20, 
\tikzstyle{block} = [rectangle, draw, fill=blue!20, 
\tikzstyle{line} = [draw, -latex']
\tikzstyle{cloud} = [draw, ellipse,fill=red!20, node distance=3cm,
\begin{document}

\title[Central Limit Theorems for group actions]{Central limit theorems for group actions which are exponentially mixing of all orders}

\author{Michael Bj\"orklund}
\address{Department of Mathematics, Chalmers, Gothenburg, Sweden}
\email{micbjo@chalmers.se}

\author{Alexander Gorodnik}
\address{University of Bristol, Bristol, UK}
\email{a.gorodnik@bristol.ac.uk}

\keywords{Central Limit Theorem, Multiple mixing}

\subjclass[2010]{Primary: 37C85, 60F05 ; Secondary: 37A25, 37A45}

\date{}


\maketitle

\begin{abstract}
In this paper we establish a general dynamical Central Limit Theorem (CLT) for group actions which are exponentially mixing of all orders.
In particular, the main result applies to Cartan flows on finite-volume quotients of simple Lie groups. Our proof uses a novel relativization
of the classical method of cumulants, which should be of independent interest. As a sample application of our techniques, we show that
the CLT holds along lacunary samples of the horocycle flow on finite-area hyperbolic surfaces applied to any smooth compactly supported function.
\end{abstract}

\section{Introduction}

\subsection{Central limit theorems in dynamics}

One of the fundamental problems in the theory of dynamical systems is to understand whether 
a sequence of observables of a chaotic dynamical system computed along generic orbits behaves 
similarly to a sequence of independent identically distributed random variables. 
More precisely, given a measure-preserving transformation $T$ of a probability space $(X,\mu)$
and a measurable function $f$ on $X$,
one is interested in analysing statistical properties of the sequence
\begin{equation}\label{eq:sequence}
f(Tx), f(T^2x),\ldots,f(T^kx),\ldots 
\end{equation}
where $x\in X$ is distributed according to the measure $\mu$.
One says that the ``Central Limit Theorem'' (CLT) holds if 
there 
exists $\sigma_f \geq 0$ such that
\begin{equation}
\label{CLT000}
\frac{1}{\sqrt{N}} \sum_{k=1}^N (f \circ T^k-\mu(f)) \implies N(0,\sigma_f^2)\quad\hbox{ as $N\to \infty$},
\end{equation}
where $N(0,\sigma_f^2)$ denotes the Gaussian distribution with variance $\sigma_f^2$, and $\implies$ denotes convergence in the sense of distributions. Explicitly this means that for every interval $(a,b)$,
$$
\mu\left(\left\{x\in X:\, a<\frac{1}{\sqrt{N}} \sum_{k=1}^N (f(T^kx)-\mu(f))<b \right\}\right)
\longrightarrow \frac{1}{\sqrt{2\pi\sigma_f}} \int_a^b e^{-u^2/(2\sigma_f)}\, du
$$
as $N\to \infty$. Due to deterministic nature of the sequence \eqref{eq:sequence},
it is known that CLT cannot hold for general functions in $L^1(X,\mu)$, 
and one is interested in finding a ``large'' subspace $\cA$ of functions satisfying 
\eqref{CLT000}.
In applications, the space $X$ is often assumed to be a finite-volume Riemannian manifold, $T : X \ra X$ a smooth map which preserves the volume measure on $X$, and $\cA$ is some linear subspace of continuous functions with prescribed
regularity (H\"older, $\cC^r$ for some $r$, etc.). 
Starting with the pioneering work of Sinai \cite{S} who established 
the Central Limit Theorem for geodesic flows on compact manifolds
with constant negative curvature, this problem 
has been extensively studied for transformations satisfying some hyperbolicity assumptions
\cite{leo,Rat,bow,gh,Den,LeJ,Ch,liv,young,lB0,lB,ClB,MT,Dol,gs}.
We refer to \cite{D,Den,gou,lB2,v} for surveys of this area of research.\\

More generally, we consider a measure-preserving action of a group $H$ on a probability space
$(X,\mu)$. Given a measurable function $f$ on $X$, we obtain a collection of observables
\begin{equation}
\label{eq:sec}
\{f(h^{-1}x):\, h\in H\}\quad \hbox{ with $x\in X$.}
\end{equation}
The aim of this paper is to establish a general Central Limit Theorem for 
the averages
$$
\frac{1}{|F_T|^{1/2}}\sum_{h\in F_T} (f\circ h^{-1}-\mu(f)), \quad F_T\subset H,
$$
for actions of higher-dimensional
(possibly non-commutative) groups. 
Previously, CLT was established in \cite{CC1,CC2,CC3} 
for actions of the group
$H = (\bZ^d,+)$ by automorphisms of compact abelian groups.
Our result holds
for general actions that are ``sufficiently chaotic'', which is manifested by
quantitative estimates on the higher-order correlations
$$
\int_X f_1(h_1^{-1}x)\cdots f_r(h_r^{-1}x)\, d\mu(x)
$$
for $h_1,\ldots, h_r\in H$ and $f_1,\ldots,f_r:X\to \mathbb{R}$.

The multi-parameter averages of this type naturally arise in
number-theoretic problems. 
In \cite{BG} we apply the
developed techniques to analyze the discrepancies of distributions of values 
of products of linear forms.

\subsection{The main result}

Let $H$ be a locally compact group equipped with a left-invariant Haar measure $m_H$ and a proper left-invariant metric $d$. We say that $(H,d)$ has \emph{sub-exponential growth} if
\begin{equation}
\label{defsubexp}
\varlimsup_{r \ra \infty} \frac{\log m_H(B_d(e,r))}{r} = 0,
\end{equation}
where $B_d(e,r)$ denotes the $d$-ball around the identity element $e$ in $G$ of radius $r$. 
It is not hard to show that every locally compact second countable abelian group has sub-exponential growth; a bit more work is required to show
that every locally compact second countable nilpotent group has sub-exponential growth. Furthermore, it is well-known that groups 
of sub-exponential growth are amenable, and thus possess right F\o lner sequences. We recall that a sequence $(F_T)$ of
compact subsets with non-empty interiors in $H$ is \emph{right F\o lner} if 
\[
\lim_{T\to\infty} \frac{m_H(F_T\, \triangle\, F_T h)}{m_H(F_T)} = 0, \quad \textrm{for all $h \in H$},
\] 
where $\triangle$ denotes the symmetric difference of sets. It is easy to show that if $H = (\bR^n,+)$, then any sequence of Euclidean balls of increasing radii forms a F\o lner sequence in $H$. More generally, balls in groups of sub-exponential growth also form F\o lner sequences.
\\

Suppose that the group $H$ acts jointly measurably by measure-preserving maps on a probability measure space $(X,\mu)$. We shall assume that 
there exists an $H$-invariant sub-algebra $\cA$ of $L^\infty(X,\mu)$ and a family $\cN=(N_s)$ of semi-norms on $\cA$, which satisfy some technical 
conditions spelled out in Section \ref{standingass} (see \eqref{mono}--\eqref{Leibniz} below), such that the $H$-action
is \emph{exponentially mixing of all orders} with respect to $d$ and $(\cA,\cN)$
(in the sense of Definition \ref{DEFmixk} below). 
Roughly speaking, this property requires that
$$
\int_X f_1(h_1^{-1}x)\cdots f_r(h_r^{-1}x)\, d\mu(x)=\left(\int_X f_1\,d\mu\right)\cdots \left(\int_X f_r\,d\mu\right)+o(1)
$$
with $f_1,\ldots,f_r\in\cA$ and $h_1,\ldots,h_r\in H$ and an explicit error term depending on the quantities $\min\{d(h_i,h_j):\,i\ne j\}$ and $\mathcal{N}_s(f_i)$.
We refer to Section \ref{standingass} for the required definitions
and notation. \\

Our main result now reads as follows. 

\begin{theorem}[Sub-exponential growth and CLT]
\label{main0}
Let $(H,d)$ have sub-exponential growth and $(F_T)$ be a right F\o lner sequence in $H$ such that $m_H(F_T)\to\infty$. 
Suppose that the action of $H$ on $(X,\mu)$ is 
exponentially mixing of all orders on an $H$-invariant sub-algebra $\cA$ of $L^\infty(X,\mu)$.
Then, for any $f \in \cA$ with $\mu(f) = 0$, 
\begin{equation}
\label{clt0}
\frac{1}{m_H(F_T)^{1/2}} \int_{F_T} (f\circ h^{-1}) \, dm_H(h) \implies N(0,\sigma_f^2) \quad\hbox{ as $T\to\infty$},
\end{equation}
where 
\begin{equation}
\label{sigmaf0}
\sigma_f^2 = \int_H \langle f, f\circ h^{-1} \rangle \, dm_H(h).
\end{equation}
\end{theorem}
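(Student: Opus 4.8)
The plan is to establish the CLT via the classical method of moments (equivalently, cumulants): it suffices to show that all moments of the normalized averages $S_T := m_H(F_T)^{-1/2}\int_{F_T}(f\circ h^{-1})\,dm_H(h)$ converge to those of $N(0,\sigma_f^2)$, i.e.\ the odd moments vanish asymptotically and the $2p$-th moment converges to $(2p-1)!!\,\sigma_f^{2p}$. Equivalently, one shows that the first and second cumulants of $S_T$ converge to $0$ and $\sigma_f^2$ respectively, while all higher cumulants tend to $0$. The first cumulant is $m_H(F_T)^{-1/2}\int_{F_T}\mu(f)\,dm_H(h)=0$ since $\mu(f)=0$, and the second cumulant is
\begin{equation*}
\frac{1}{m_H(F_T)}\int_{F_T}\int_{F_T}\langle f\circ h_1^{-1}, f\circ h_2^{-1}\rangle\,dm_H(h_1)\,dm_H(h_2)
=\frac{1}{m_H(F_T)}\int_{F_T}\int_{F_T}\langle f, f\circ (h_1^{-1}h_2)\rangle\,dm_H(h_1)\,dm_H(h_2),
\end{equation*}
which by a standard F\o lner-averaging argument converges to $\sigma_f^2=\int_H\langle f,f\circ h^{-1}\rangle\,dm_H(h)$; here exponential mixing of order $2$ guarantees that the inner integrand decays in $d(h_1,h_2)$ fast enough for $\sigma_f^2$ to be finite and for the diagonal-averaging limit to hold.

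The heart of the argument is controlling the higher-order cumulants. The $r$-th cumulant of $S_T$ expands as
\begin{equation*}
\cum_r(S_T)=\frac{1}{m_H(F_T)^{r/2}}\int_{F_T^r}\cum\bigl(f\circ h_1^{-1},\ldots,f\circ h_r^{-1}\bigr)\,dm_H(h_1)\cdots dm_H(h_r),
\end{equation*}
where $\cum(\cdots)$ is the joint cumulant of the random variables $f\circ h_i^{-1}$. The key structural input — the ``relativized method of cumulants'' advertised in the abstract — should be a bound showing that this joint cumulant is small unless the points $h_1,\ldots,h_r$ are ``clustered'': more precisely, that $\cum(f\circ h_1^{-1},\ldots,f\circ h_r^{-1})$ decays exponentially in the ``spread'' of the configuration, something like the minimal total edge length of a spanning tree on $\{h_1,\ldots,h_r\}$ (or the largest gap when the configuration is split into two groups). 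This is exactly where exponential mixing of all orders enters: writing the cumulant as an alternating sum of products of correlation integrals and using the mixing estimate on each term, the leading (fully factored) contributions cancel by the combinatorial identity defining cumulants, and what remains is governed by the mixing error, hence by $\min_{i\ne j}d(h_i,h_j)$-type quantities. Combining such a decay bound with the observation that, because $(H,d)$ has sub-exponential growth, the volume of the set of clustered configurations in $F_T^r$ grows strictly slower than $m_H(F_T)^{r/2}$ for $r\ge 3$, one concludes $\cum_r(S_T)\to 0$.

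The main obstacle, and the step requiring the real work, is proving the cumulant decay bound: one must show that the combinatorial cancellation in the cumulant truly survives when the exact factorization is replaced by factorization-up-to-an-exponentially-small-error, uniformly over all configurations and all orders $r$. This is delicate because the number of set-partitions contributing to $\cum_r$ grows super-exponentially in $r$, so the per-term mixing error must be summable against this combinatorial explosion; one typically organizes the configuration into maximal clusters at a well-chosen scale, applies the mixing estimate across the ``large gaps'' between clusters, and recursively controls the within-cluster cumulants, tracking the semi-norms $N_s(f)$ via the Leibniz-type conditions \eqref{mono}--\eqref{Leibniz} to ensure the estimates close. Once this relativized cumulant bound is in place, the remaining integration over $F_T^r$ and the passage to the limit are a matter of bookkeeping with the sub-exponential growth hypothesis and the F\o lner property.
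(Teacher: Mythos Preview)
Your plan is correct and matches the paper's approach: the cumulant method, with the second cumulant handled via the F\o lner property and exponential $2$-mixing (Section~\ref{sec:var}), and higher cumulants shown to vanish by combining a cluster decomposition of $H^r$ with exponential mixing and sub-exponential growth. The paper makes your ``cancellation'' step precise via \emph{conditional cumulants}---for any partition $\cQ$ of $[r]$ with $|\cQ|\ge 2$ one has $\cum_{[r]}(\psi^\cQ_{f,\underline h})=0$ identically (Proposition~\ref{prop_condiszero}), so no recursion on within-cluster cumulants is needed---and uses a \emph{multi-scale} exhaustion $H^r=\Delta(\beta_r)\cup\bigcup_{j,\cQ}\Delta_\cQ(3\beta_j,\beta_{j+1})$ (Proposition~\ref{prop_exhaust}) rather than a single clustering scale; your worry about the super-exponential number of set partitions is unnecessary since $r$ is fixed throughout and that count is absorbed into the implicit constant $\ll_r$.
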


A more general version of this theorem, which does not assume  sub-exponential growth,
will be stated in Theorem \ref{main}.

\begin{remark}
Exponential $2$-mixing, combined with the sub-exponential growth of $H$, will ensure that $\sigma_f$ given by \eqref{sigmaf0} is finite for all $f \in \cA$ (see Section \ref{sec:var}). However, 
we stress that $\sigma_f  = 0$ is definitely possible for non-zero $f$; indeed, it is not hard to show that this happens if $f = g - g\circ h_o$ for some $h_o \in H$ and $g \in \cA$. 
\end{remark}

\subsubsection{A sample application: CLT for Cartan actions on homogeneous spaces}\label{sec:hom}

Let $L$ be a connected Lie group, $\Gamma$ a lattice in $L$ and $\mu_Y$ the unique $L$-invariant probability measure
on $Y = \Gamma \backslash L$. Let $G$ be a semisimple Lie subgroup of $L$ with finite center, and assume that the $G$-action 
on $(Y,\mu_Y)$ has strong spectral gap. Let $A < G$ denote a Cartan subgroup of $G$, and fix a closed subgroup 
$H$ of $A$. Let $(B_T)$ be a sequence of strictly increasing balls in $H$ with respect to some left-invariant Riemannian metric on 
$G$ restricted to $H$. One readily checks that $(B_T)$ forms a right F\o lner sequence in $H$.

In recent joint work \cite{BEG} with M. Einsiedler, the authors showed that if one takes $\cA$ to be the algebra of smooth functions on $Y$
with compact supports, and $\cN$ a family of certain Sobolev norms on $\cA$, then the assumptions of Theorem \ref{clt0} are 
satisfied for the $H$-action on $(Y,\mu_Y)$ with respect to $(\cA,\cN)$, which leads to the following corollary of Theorem \ref{main0}.

\label{sample1}
\begin{corollary}
\label{cor0}
For every real-valued, compactly supported smooth function $f$ on $Y$ with $\mu_Y(f) = 0$, 
\[
\frac{1}{m_H(B_T)^{1/2}} \int_{B_T} (f\circ h^{-1}) \, dm_H(h) \implies N(0,\sigma_f^2),
\]
where $\sigma_f$ is given by \eqref{sigmaf0}.
\end{corollary}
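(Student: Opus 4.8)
The plan is to obtain Corollary~\ref{cor0} as a direct specialization of Theorem~\ref{main0}: one checks that, for the $H$-action on $(Y,\mu_Y)$ with $\cA=\cC_c^\infty(Y)$ and $\cN$ the indicated family of Sobolev norms, all of the hypotheses of that theorem hold, and then invokes it. Concretely, there are three things to verify: (i) $(H,d)$ has sub-exponential growth; (ii) $(B_T)$ is a right F\o lner sequence in $H$ with $m_H(B_T)\to\infty$; and (iii) the $H$-action on $(Y,\mu_Y)$ is exponentially mixing of all orders on $(\cA,\cN)$ in the sense of Definition~\ref{DEFmixk}, with $(\cA,\cN)$ meeting the standing conditions \eqref{mono}--\eqref{Leibniz}.

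Items (i) and (ii) are immediate. Since $G$ is semisimple with finite center and $A<G$ is a Cartan subgroup, $A$ --- and hence its closed subgroup $H$ --- is abelian, so $H$ has polynomial growth and in particular satisfies \eqref{defsubexp}. The metric $d$ on $H$ is the restriction of a left-invariant Riemannian metric on $G$, hence proper and left-invariant on the closed subgroup $H$; as $H$ has sub-exponential growth, increasing metric balls form a right F\o lner sequence, as recorded in the discussion after \eqref{defsubexp}. Finally $H$ is non-compact (otherwise the $B_T$ could not be strictly increasing), so $m_H(H)=\infty$ and the exhausting sequence $(B_T)$ satisfies $m_H(B_T)\to\infty$.

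Item (iii) is the only substantive ingredient, and it is supplied not by the present paper but by the joint work \cite{BEG} with Einsiedler: there the strong spectral gap of the $G$-action on $(Y,\mu_Y)$ is converted --- via decay-of-matrix-coefficient estimates for $L^2(Y,\mu_Y)$ as a $G$-representation together with Sobolev embedding --- into quantitative bounds on the multiple correlations $\int_Y f_1(h_1^{-1}y)\cdots f_r(h_r^{-1}y)\,d\mu_Y(y)$ for $f_i\in\cC_c^\infty(Y)$ and $h_i\in H$, with an error term controlled by $\min_{i\ne j}d(h_i,h_j)$ and by Sobolev norms of the $f_i$; this is precisely exponential mixing of all orders. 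The same source verifies that the Sobolev norms $\cN=(N_s)$ satisfy \eqref{mono}--\eqref{Leibniz} --- monotonicity in the order, compatibility with $H$-translation on $\cA$, and a Leibniz-type submultiplicative bound for products --- which are routine properties of Sobolev norms on $\cC_c^\infty(Y)$. Granting (i)--(iii), Theorem~\ref{main0} applies verbatim: for every real-valued $f\in\cC_c^\infty(Y)$ with $\mu_Y(f)=0$ the normalized averages $m_H(B_T)^{-1/2}\int_{B_T}(f\circ h^{-1})\,dm_H(h)$ converge in distribution to $N(0,\sigma_f^2)$ with $\sigma_f^2=\int_H\langle f,f\circ h^{-1}\rangle\,dm_H(h)$, the integral being finite by exponential $2$-mixing together with the sub-exponential growth of $H$, as in the Remark following Theorem~\ref{main0}. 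Thus the only real obstacle --- establishing the quantitative multiple-mixing property --- is entirely resolved in \cite{BEG}, and within this paper Corollary~\ref{cor0} needs nothing beyond the verification above.
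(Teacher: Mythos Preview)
Your proposal is correct and follows exactly the paper's approach: the corollary is stated as an immediate consequence of Theorem~\ref{main0}, with the exponential mixing of all orders (and the requisite properties of the Sobolev norms) imported from \cite{BEG}, and the sub-exponential growth and F\o lner property for $H\subset A$ noted as elementary. The paper gives no separate proof beyond the setup paragraph preceding the corollary, and your write-up simply makes that paragraph explicit.
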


For example, Corollary \ref{cor0} applies when
$$
X=\SL_m(\bZ)\backslash \SL_m(\bR)
\quad\hbox{or}\quad X=(\bZ^m \rtimes \SL_m(\bZ))\backslash (\bR^m \rtimes \SL_m(\bR)),
$$
and $H$ is a closed subgroup of the group of diagonal matrices in $\SL_m(\bR)$.

\begin{remark}
The first author and G. Zhang \cite{BZ} have recently provided, under some technical assumptions on $G$ and $H$, lower (positive) bounds on the variance $\sigma_f$, whenever $f$ is non-zero and invariant under a maximal compact subgroup of $G$.
\end{remark}

\subsection{Connections to earlier works}
A number of different approaches have been developed for proving 
dynamical Central Limit Theorems for one-parameter actions.
An influential approaches based on a martingale approximation 
originated in the work of Gordin \cite{Go}.
We refer to the survey \cite{lB2} by Le Borgne for an overview of this technique, 
as well as for an extensive list of references. 
The martingale approximation method becomes harder to implement already for actions of $H = (\bZ^d,+)$, $d \geq 2$, let alone for actions by non-commutative groups; see for instance \cite{Vol} for some recent developments in this direction when $H = (\bZ^d,+)$.
Other approaches to the Central Limit Theorem involve Markov
approximations \cite{Rat,bow,Ch,Dol} and 
spectral analysis of transfer operators \cite{gh,gou},
and it is also not clear how to implement them for multi-parameter actions.

In this paper, we use an alternative approach to the Central Limit Theorem based on the classical \emph{method of cumulants}, 
due to Fr\'echet and Shohat \cite{FS} (see Section \ref{sec:outlineproofmain} for an outline of this method). Roughly speaking, 
this method is equivalent to the more well-known \emph{method of moments}, but is better tailored for approximations to Gaussian laws.
An important novelty in our work is the systematic use of \emph{conditional cumulants} (see Section \ref{sec:cond}), which greatly
simplifies the estimation of cumulants in the presence of exponential mixing of all orders. 

The method of cumulants has been recently used by Cohen and Conze in \cite{CC1,CC2,CC3} to establish Central Limit 
Theorems for multiple mixing actions by $H = (\bZ^d,+)$ by automorphisms of a compact abelian group $X$ --- but only for functions
which are finite sums of characters on $X$. It seems that it is still not known whether in this setting the CLT also holds for all smooth functions. 

\subsection{A general Cental Limit Theorem}

Our method allows to prove the Central Limit Theorem for actions of general groups
and for more general averaging schemes provided that some technical conditions are verified.
Let us now assume that $H$ is  a locally compact second countable group equipped
with a left-invariant metric $d$.

The following is the main technical result of this paper.

\begin{theorem}[General CLT]
\label{main}
Let $(\nu_T)$ be a sequence of positive and finite Borel measures on $H$ such 
that $\|\nu_T\| \ra \infty$, and for any integer $r \geq 3$
and real number $c > 0$, 
\begin{equation}
\label{cond_nuT}
\lim_{T\to\infty} \, \int_H \nu_T(B_d(h,c \log \|\nu_T\|))^{r-1} \, d\nu_T(h) = 0.
\end{equation}
Suppose that the action of $H$ on $(X,\mu)$ is 
exponentially mixing of all orders on an $H$-invariant sub-algebra $\cA$ of $L^\infty(X,\mu)$.
Let $f \in \cA$ with $\mu(f) = 0$ and suppose that the limit
\begin{equation}
\label{cond_sigmaf}
\sigma_f := \lim_{T\to\infty} \, \|\nu_T * f \|_L^2 <\infty
\end{equation}
exists.
Then 
$$
\nu_T * f \implies N(0,\sigma^2_f)\quad\hbox{ as $T\to\infty$.}
$$
\end{theorem}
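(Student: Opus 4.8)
The plan is to prove the theorem by the \emph{method of cumulants} (Fr\'echet--Shohat \cite{FS}). Set $S_T:=\nu_T*f$; since the action preserves $\mu$ and $f\in L^\infty(X,\mu)$, we have $S_T\in L^\infty(X,\mu)$, so all its moments and cumulants $\cum_n(S_T)$ are finite. It then suffices to show that $\cum_n(S_T)$ converges as $T\to\infty$ to the $n$-th cumulant of $N(0,\sigma_f^2)$ for every $n\ge 1$, because the moment--cumulant relations turn this into convergence of all moments, and $N(0,\sigma_f^2)$ is determined by its moments. The first two cumulants are immediate: $\cum_1(S_T)=\mu(S_T)=\int_H\mu(f\circ h^{-1})\,d\nu_T(h)=\mu(f)\,\|\nu_T\|=0$, and $\cum_2(S_T)=\mu(S_T^2)-\mu(S_T)^2=\|S_T\|_L^2\to\sigma_f^2$ by hypothesis \eqref{cond_sigmaf}. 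Hence everything reduces to proving that $\cum_n(S_T)\to 0$ for each fixed $n\ge 3$.

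For that, I would first expand, using multilinearity of the joint cumulant and Fubini's theorem (valid since $f$ is bounded and the $\nu_T$ are finite),
\[
\cum_n(S_T)=\int_{H^n}c(h_1,\dots,h_n)\,d\nu_T(h_1)\cdots d\nu_T(h_n),\qquad c(h_1,\dots,h_n):=\cum\big(f\circ h_1^{-1},\dots,f\circ h_n^{-1}\big),
\]
and then combine two pointwise estimates on the integrand. The first is the trivial uniform bound $|c(h_1,\dots,h_n)|\le C_{n,f}$, which holds because $c$ is a fixed polynomial in the joint moments and $\|f\circ h_i^{-1}\|_\infty=\|f\|_\infty$. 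The second, and decisive, one is exponential decay in the connectivity scale of the configuration: writing $\rho(h_1,\dots,h_n)$ for the supremum of those $t\ge 0$ for which $\{h_1,\dots,h_n\}$ splits into two nonempty blocks at mutual $d$-distance $>t$ (equivalently, the weight of the heaviest edge in a minimum spanning tree on $h_1,\dots,h_n$), I would establish
\[
|c(h_1,\dots,h_n)|\ \ll_{n,f}\ e^{-\delta\rho(h_1,\dots,h_n)},
\]
with $\delta>0$ the mixing rate. The point is the classical fact that a joint cumulant vanishes as soon as the variables split into two mutually independent blocks; exponential mixing of all orders replaces exact independence by approximate independence at scale $\rho$, and the joint cumulant, being a polynomial in the joint moments, inherits the resulting $O(e^{-\delta\rho})$ error.

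Granting the decay estimate, I would finish by splitting $H^n$ at the threshold $c\log\|\nu_T\|$ for a constant $c>n/\delta$. On the region $\{\rho\ge c\log\|\nu_T\|\}$ the integrand is $\ll_{n,f}\|\nu_T\|^{-\delta c}$, so its contribution is $\ll_{n,f}\|\nu_T\|^{\,n-\delta c}\to 0$ since $\|\nu_T\|\to\infty$ and $n-\delta c<0$. On the complementary region $\{\rho<c\log\|\nu_T\|\}$ the configuration is connected in the graph joining points at distance $\le c\log\|\nu_T\|$, hence has $d$-diameter at most $(n-1)c\log\|\nu_T\|$; in particular $h_2,\dots,h_n\in B_d(h_1,(n-1)c\log\|\nu_T\|)$, and using the trivial bound the contribution is at most
\[
C_{n,f}\int_H\nu_T\big(B_d(h_1,(n-1)c\log\|\nu_T\|)\big)^{n-1}\,d\nu_T(h_1),
\]
which tends to $0$ by the hypothesis \eqref{cond_nuT} applied with $(n-1)c$ in place of $c$. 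This gives $\cum_n(S_T)\to 0$ for every $n\ge 3$, completing the argument.

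I expect the decay bound for the joint cumulant to be the main obstacle. The vanishing of a joint cumulant across independent blocks is elementary, but here the blocks are only approximately independent, and --- more seriously --- a block may itself consist of several points $h_i$ that are pairwise close yet spread over a large region; to invoke mixing at the separating scale $\rho$ one is forced to treat $\prod_{i\in\text{block}}f\circ h_i^{-1}$ as a single observable in $\cA$, which requires controlling the seminorms $N_s$ of such products through the algebra and Leibniz-type axioms \eqref{mono}--\eqref{Leibniz} and keeping track of their growth under the translations internal to the block. Organizing this bookkeeping by scales --- conditioning on the finer scales while applying exponential mixing of all orders only at the coarsest scale that disconnects the configuration --- is precisely the role of the relativized (\emph{conditional}) cumulant machinery of Section \ref{sec:cond}, and that is where I expect the real work to lie.
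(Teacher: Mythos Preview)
Your plan matches the paper's: the method of cumulants (Proposition~\ref{CLT}), the expansion \eqref{altexp}, a decay bound on the joint cumulant $\cum_{[r]}(\psi_{f,\underline{h}})$, and a near-diagonal/off-diagonal split at scale $\asymp\log\|\nu_T\|$. The only difference is packaging: the paper does not isolate a pointwise bound $|c(\underline{h})|\ll e^{-\delta\,\rho(\underline{h})}$ but proves the integrated estimate (Proposition~\ref{mainprop}) directly, via the multi-scale cover $H^r=\Delta(\beta_r)\cup\bigcup_{j,\cQ}\Delta_\cQ(3\beta_j,\beta_{j+1})$ of Proposition~\ref{prop_exhaust} together with $|c|\ll e^{-\delta_r\gamma}$ on each off-diagonal piece from Proposition~\ref{prop_offdiagonal}. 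These formulations are equivalent, but be aware that the exponent one actually obtains is \emph{not} the raw mixing rate $\delta_r$: a single application of mixing across the disconnecting cut at scale $\rho$ costs a factor $e^{r\sigma_s\alpha}$ from the seminorm growth \eqref{grpbnd} inside a block of diameter $\alpha$, and since $\alpha$ may be as large as $(r-1)\rho$, the paper overcomes this loss precisely by the geometric hierarchy of scales \eqref{betaj2} (yielding an effective exponent $\delta_r/c_{r,s}$ with $c_{r,s}$ as in \eqref{defcr}). This is exactly the obstacle you correctly anticipate in your final paragraph, so modulo replacing ``$\delta$ the mixing rate'' by this adjusted constant, your outline is the paper's proof.
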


In Section \ref{sec:thmmain0} below we show that these conditions are satisfied for the averages 
\[
\nu_T = \frac{1}{m_H(F_T)^{1/2}} \int_{F_T} \, \delta_h \, dm_H(h),
\]
when $(H,d)$ has sub-exponential growth, and $(F_T)$ is a right F\o lner sequence in $H$, and 
we identify the limit in \eqref{cond_sigmaf}.
Hence, Theorem \ref{main0} will be deduced from Theorem \ref{main}.

\subsubsection{A sample application: CLT for unipotent flows sampled at lacunary times}

Let us now retain the notation from Section \ref{sec:hom}, 
and let $u(t)$ be a non-trivial one-parameter unipotent subgroup $G$.
Although actions of Cartan subgroups of $G$ 
on $Y = \Gamma \backslash L$ are exponentially mixing of all orders, actions of \emph{unipotent}
subgroups are not (although they are \emph{polynomially mixing of all orders}). Sinai raised the 
question whether the Central Limit Theorem still holds for unipotent flows on $Y$; this was later 
answered in the negative by Flaminio and Forni in \cite[Cor.~1.6]{FF}. 
However, our next corollary shows that if one is willing to ``speed up'' unipotent flows by sampling them at lacunary times, then the CLT does hold.

\begin{corollary}
\label{cor}
For any lacunary sequence $(\lambda_k) \subset \bR_{+}$ and a real-valued, compactly 
supported smooth function $f$ on $Y$ with $\mu_Y(f) = 0$,
\begin{equation}
\label{clt1}
\frac{1}{\sqrt{N}} \sum_{k=1}^{N} f\circ u({\lambda_k}) \implies N(0,\|f\|_{L^2}^2)\quad\hbox{ as $N\to\infty$.}
\end{equation}
\end{corollary}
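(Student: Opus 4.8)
The plan is to deduce Corollary \ref{cor} from the general CLT (Theorem \ref{main}) applied to the Cartan $H$-action, by encoding the lacunary sampling times of the unipotent flow inside a suitable sequence of measures $\nu_T$ on $H$. The starting point is the standard renormalization: if $u(t)$ is a non-trivial one-parameter unipotent subgroup of $G$, one can find a one-parameter diagonalizable subgroup $a(s)=\exp(sY)\subset A$ such that $a(s)u(t)a(-s)=u(e^{s}t)$ (after rescaling $Y$). Hence $u(\lambda_k)=a(s_k)u(1)a(-s_k)$ with $e^{s_k}=\lambda_k$, i.e.\ $s_k=\log\lambda_k$. Writing $g_k = a(s_k)$, the ergodic sums $\sum_{k=1}^N f\circ u(\lambda_k)$ are, up to the fixed translation by $u(1)$, ergodic sums $\sum_{k=1}^N (f\circ u(1))\circ g_k^{-1}$ along the points $g_k = a(\log\lambda_k)$ of the one-parameter subgroup $H=\{a(s)\}\cong\bR$ inside $A$. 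Since $(\lambda_k)$ is lacunary, there is $q>1$ with $\lambda_{k+1}/\lambda_k\ge q$, so $s_{k+1}-s_k\ge\log q>0$: the sampling points $s_k$ in $H\cong\bR$ are \emph{uniformly separated} and grow (at least) linearly, with $s_N\asymp N$.

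Next I would set $\nu_T = N^{-1/2}\sum_{k=1}^{N}\delta_{a(s_k)}$ (taking $T=N$), so that $\nu_T * (f\circ u(1)) = N^{-1/2}\sum_{k=1}^N (f\circ u(1))\circ a(-s_k)$ and $\|\nu_T\| = N^{1/2}\to\infty$. The two hypotheses of Theorem \ref{main} must then be checked. For the combinatorial condition \eqref{cond_nuT}: because the $s_k$ are $(\log q)$-separated in $\bR$, any ball $B_d(a(s),c\log\|\nu_T\|)=B_d(a(s),\tfrac{c}{2}\log N)$ contains at most $O(\log N)$ of the points $a(s_k)$, so $\nu_T(B_d(h,c\log\|\nu_T\|))\le C\log N\cdot N^{-1/2}$ uniformly in $h$; therefore
\[
\int_H \nu_T\big(B_d(h,c\log\|\nu_T\|)\big)^{r-1}\,d\nu_T(h)\ \le\ \big(C N^{-1/2}\log N\big)^{r-1}\cdot\|\nu_T\|\ =\ C^{r-1} N^{-(r-2)/2}(\log N)^{r-1},
\]
which tends to $0$ for every $r\ge 3$. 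For the variance condition \eqref{cond_sigmaf}, one computes $\|\nu_T*(f\circ u(1))\|_L^2 = N^{-1}\sum_{j,k=1}^N \langle f\circ u(1), (f\circ u(1))\circ a(s_j-s_k)^{-1}\rangle$; the diagonal terms contribute $\|f\circ u(1)\|_{L^2}^2 = \|f\|_{L^2}^2$, and the off-diagonal terms are governed by exponential $2$-mixing: $|\langle g, g\circ a(s)^{-1}\rangle|\le C\,e^{-\kappa|s|}\,N_\star(g)^2$ for $g=f\circ u(1)$, and since $|s_j-s_k|\ge (\log q)|j-k|$ the off-diagonal sum is bounded by $N^{-1}\sum_{j\ne k} Ce^{-\kappa(\log q)|j-k|}\cdot N_\star(g)^2 \le C' N^{-1}\cdot N = C'$... more precisely it is $O(N^{-1}\cdot N)=O(1)$ but with the geometric tail summing to a \emph{convergent} constant, so the off-diagonal contribution is $O(N^{-1})\to 0$. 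Hence $\sigma_{f\circ u(1)} = \|f\|_{L^2}$, and Theorem \ref{main} yields $\nu_T*(f\circ u(1))\implies N(0,\|f\|_{L^2}^2)$.

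The final step is to remove the auxiliary translation by $u(1)$. One has the exact identity $(f\circ u(1))\circ a(-s_k) = f\circ\big(a(s_k)u(1)a(-s_k)\big)^{-1}\circ(\text{id})$ — more carefully, $f(u(1)^{-1}a(s_k)^{-1}x) = f(u(\lambda_k)^{-1}a(s_k)^{-1}x)\cdot$ no: rather, using $a(s_k)u(1) = u(\lambda_k)a(s_k)$ one gets $(f\circ u(1))\circ a(-s_k) = f\circ u(1)\circ a(-s_k)$, and $u(1)a(-s_k)=a(-s_k)u(\lambda_k^{-1})$... I would instead simply observe that $\{f\circ u(\lambda_k)\}$ and $\{(f\circ u(1))\circ a(s_k)^{-1}\}$ are related by the measure-preserving change of variable induced by a single group element, or alternatively run the whole argument with the observable $f$ itself and the measure $\nu_T' = N^{-1/2}\sum_k\delta_{u(\lambda_k)}$, checking \eqref{cond_nuT} and \eqref{cond_sigmaf} directly for $\nu_T'$ via the conjugation $u(\lambda_k)=a(s_k)u(1)a(-s_k)$ (which moves $d$-balls in $H'=\{u(t)\}$ to $d$-balls of the same radius in the conjugated copy, up to bounded distortion from the $u(1)$ factor). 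The main obstacle I anticipate is precisely this last bookkeeping: the unipotent points $u(\lambda_k)$ do \emph{not} lie in the Cartan subgroup $H$, so Theorem \ref{main} is not directly about the measure $\nu_T'$; one must honestly justify — using the renormalization flow and, crucially, the fact that exponential mixing of all orders is a statement about \emph{arbitrary} group elements $h_i\in H$ with the error controlled by $\min_{i\ne j}d(h_i,h_j)$ — that the separation and decay estimates survive the conjugation. Since conjugation by the \emph{fixed} element $u(1)$ distorts the metric $d$ by only a bounded multiplicative/additive amount, all the estimates above go through with adjusted constants, and the lacunary separation $s_{k+1}-s_k\ge\log q$ is exactly what makes condition \eqref{cond_nuT} hold; without lacunarity (e.g.\ for $\lambda_k = k$) the points would cluster on the logarithmic scale and \eqref{cond_nuT} would fail, consistent with the Flaminio–Forni counterexample.
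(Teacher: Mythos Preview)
Your renormalization approach has a genuine gap that cannot be fixed by ``bookkeeping''. The identity $u(\lambda_k)=a(s_k)u(1)a(-s_k)$ has $a$-factors on \emph{both} sides, so
\[
(f\circ u(1))\big(a(-s_k)x\big)=f\big(u(1)a(-s_k)x\big)=f\big(a(-s_k)u(\lambda_k)x\big),
\]
which is $(f\circ a(-s_k))\circ u(\lambda_k)$, not $f\circ u(\lambda_k)$. There is no fixed observable $g$ for which $g\circ a(s_k)^{-1}=f\circ u(\lambda_k)$; the observable would have to depend on $k$. Your third paragraph shows you sensed this, but the proposed fix via ``bounded distortion from the $u(1)$ factor'' does not address it: the distortion is by $a(\pm s_k)$, which is unbounded in $k$. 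So the Cartan--subgroup reduction simply does not produce the sum in \eqref{clt1}.

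The paper's route is the direct one you mention only in passing: take $H=G$ (not a Cartan subgroup), set $\nu_N=N^{-1/2}\sum_{k=1}^N\delta_{u(\lambda_k)^{-1}}$, and apply Theorem~\ref{main} with the exponential mixing of the full $G$-action from \cite{BEG}. The key ingredient you are missing is the elementary estimate
\[
d(u(t),e)\ \ge\ c_1\log|t|-c_2
\]
for a left-invariant Riemannian metric $d$ on $G$ (distance grows logarithmically along unipotent one-parameter subgroups). This converts exponential decay in $d$ into \emph{polynomial} decay in $t$: $\langle f,u(t)\cdot f\rangle\ll|t|^{-p}$. Condition~\eqref{cond_nuT} then becomes the count $\#\{m:|\lambda_m-\lambda_n|\le cN^\theta\}=O(\log N)$, which follows from lacunarity exactly as in your Cartan argument; and condition~\eqref{cond_sigmaf} reduces to showing $\sum_{m<n}|\lambda_n-\lambda_m|^{-p}<\infty$, which again follows from lacunarity. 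No conjugation is needed anywhere.
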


\begin{remark}
Contrary to Theorem \ref{main0}, we note that in this setting the variance $\sigma_f$ is always positive whenever $f$ has mean zero and does not 
vanish identically on $Y$.
\end{remark}
\subsection{Structure of the paper}

In Sections \ref{sec:thmmain0} and \ref{sec:cormain} below we shall assume Theorem \ref{main} and show how
Theorem \ref{main0} and Corollary \ref{cor} respectively can be deduced from it. 

The rest of the paper is then devoted to the proof of Theorem \ref{main}, which we shall break down into several 
propositions, whose proofs, in turn, will be divided into further lemmas and propositions. The following tree graphically 
represents this break-down:
\begin{center}
\begin{forest} 
[Theorem \ref{main}
     [Proposition \ref{mainprop}
            [Proposition \ref{prop_offdiagonal} [Proposition \ref{prop_cond}[Lemma \ref{taumap}]] [Proposition \ref{prop_conditional_estimate} [Lemma \ref{lemma_estimate_I}]]]
     [Proposition \ref{prop_exhaust}
            [Lemma \ref{lemma_coarser}]]
            ]]
\end{forest}
\end{center}

\section{Definitions and standing technical assumptions}
\label{standingass}

Let us here in this section give the definitions of some notions used in the introduction, and collect some of the technical 
assumptions which are necessary for our analysis. \\

Throughout the paper, we shall fix a locally compact second countable group $H$, and a left-invariant metric $d$ on $H$. We assume that $H$ acts jointly
measurably by measure preserving maps on a probability measure space $(X,\mu)$. In particular, $H$ also acts (weakly continuously) by isometries on $L^\infty(X,\mu)$ via 
\[
h \cdot f = f \circ h^{-1}, \quad \textrm{for $f \in L^\infty(X,\mu)$ and $h \in H$}.
\]
We fix an $H$-invariant sub-algebra $\cA$ of $L^\infty(X,\mu)$, and a family $\cN = (N_s)$ of semi-norms on $\cA$, indexed by 
positive integers $s$. 

\begin{definition}[Exponential mixing of all orders]
\label{DEFmixk}
Let $r \geq 2$ be an integer. We say that the $H$-action on $(X,\mu)$ is \emph{exponentially mixing of order $r$}, with respect to $d$ and $(\cA,\cN)$, if there exist $\delta_r > 0$ and an integer 
$s_r > 0$ such that for all $s > s_r$ and $f_1,\ldots, f_r \in \cA$,
\begin{equation}
\label{defmixk}
\Big| 
\mu
\Big(
\prod_{i=1}^r h_i \cdot f_i
\Big) - \prod_{i=1}^r \mu(f_i) 
\Big|
\ll_{r,s}
e^{-\delta_r d_r(\underline{h})} \, \prod_{i=1}^r N_s(f_i),
\end{equation}
for all $\underline{h} = (h_1,\ldots,h_r) \in H^r$, where
\begin{equation}
\label{defdr}
d_r(\underline{h}) = \min_{i \neq j} d(h_i, h_j).
\end{equation}
We refer to $\delta_r$ as the \emph{rate of $r$-mixing}.
The notation $A \ll_{r,s} B$ here means that there exists a constant $c$, which is allowed to depend on $r$ and $s$ such that 
$A \leq c\, B$. 
\end{definition}

We shall from now on always assume that the $H$-action on $(X,\mu)$ is exponentially mixing of all orders with respect to $d$ and $(\cA,\cN)$; in particular, from now on, the meanings of the numbers $s_r$ and $\delta_r$ have been fixed. Furthermore, we shall require 
the following four technical conditions on the family $\cN=(N_s)$
(with all implicit constants depending only on $s$):
\begin{itemize}
\item \textbf{Monotonicity:} For all $s \geq 1$ and $f \in \cA$,
\begin{equation} 
N_s(f) \ll_s N_{s+1}(f). \label{mono}
\end{equation}
\item \textbf{Sobolev embedding:} For all $s \geq 1$ and $f \in \cA$,
\begin{equation}
\|f\|_{L^\infty} \ll_s  N_s(f). \label{unibnd}
\end{equation}
\item \textbf{$H$-boundedness:} For all $s \geq 1$, there exists $\sigma_s > 0$ such that for all  $f \in \cA$ and $h \in H$,
\begin{equation}
N_s(h \cdot f) \ll_s e^{\sigma_s \, d(h,e)} \, N_s(f). \label{grpbnd}
\end{equation}
\item \textbf{Almost multiplicative:} For all $s \geq 1$ and $f_1, f_2 \in \cA$,
\begin{equation}
N_s(f_1 f_2) \ll_s  N_{s+1}(f_1) \, N_{s+1}(f_2). \label{Leibniz}
\end{equation}
\end{itemize}
 We may further, and shall, throughout the paper, assume that the sequence $(\sigma_s)$ increases with $s$, and the sequence $(\delta_r)$ from Definition \ref{DEFmixk} decreases with $r$. Also, without 
any loss of generality, we may assume that $\delta_r < r \sigma_s$ for all $r$ and $s$.

\section{Proof of Theorem \ref{main0} assuming Theorem \ref{main}}
\label{sec:thmmain0}
Recall our standing assumptions on $H, d, (X,\mu)$ and $(\cA,\cN)$ from Section \ref{standingass}. Let us further 
assume that $(H,d)$ has sub-exponential growth, and $(F_T)$ is a right F\o lner sequence in $H$. 
We shall apply Theorem \ref{main} to the sequence $(\nu_T)$ of positive and finite measures on $H$ 
defined by 
\begin{equation}
\label{Case1}
\nu_T  = \frac{1}{m_H(F_T)^{1/2}} \int_{F_T} \delta_h \, dm_H(h).
\end{equation}
One readily checks that $\|\nu_T\| = m_H(F_T)^{1/2}$ for all $T$. In particular,
$\|\nu_T\|\to \infty$ as $T\to\infty$.
In order to prove Theorem \ref{main0}, it suffices to verify conditions \eqref{cond_nuT} and \eqref{cond_sigmaf}.

\subsection{Checking condition \eqref{cond_nuT}}

Writing out condition \eqref{cond_nuT} explicitly in our setting, we see that we must prove that for every integer $r \geq 3$ 
and real number $c > 0$,  
\[
\frac{1}{m_H(F_T)^{r/2}} \int_{F_T} m_H(F_T \cap B_d(h,\frac{c}{2} \log m_H(F_T)))^{r-1} \, dm_H(h) \ra 0,
\]
as $T\to\infty$. The integral is bounded from above by
\[
m_H(F_T) \, m_H(B_d(e,\frac{c}{2} \log m_H(F_T)))^{r-1}.
\]
Hence it suffices to show that for every $r \geq 3$ and real number $c > 0$,
\[
\frac{m_H(B_d(e,\frac{c}{2} \log m_H(F_T)))^{r-1}}{m_H(F_T)^{r/2-1}} \ra 0,
\]
as $T\to\infty$. Since $m_H(F_T) \ra \infty$ and $r/2 > 1$, this readily follows from the sub-exponential growth of $(H,d)$, see \eqref{defsubexp}.

\subsection{Calculating the variance}\label{sec:var}

Upon expanding \eqref{cond_sigmaf} for our choice of $(\nu_T)$ and a fixed $f \in \cA$ with $\mu(f) =0$, we see that 
one has to show that
\begin{equation}
\label{stepsigmaf}
\frac{1}{m_H(F_T)} \int_H \int_H \chi_{F_T}(h_1) \, \chi_{F_T}(h_2) \phi_f(h_1^{-1}h_2) \, dm_H(h_1) \, dm_H(h_2)
 \ra \int_H \phi_f(h) \, dm_H(h),
\end{equation}
where $\phi_f(h) = \langle f, h \cdot f \rangle$. Using left-$H$-invariance of $m_H$, the left-hand side 
can be re-written as
\begin{equation}
\label{lefthand}
\int_H \frac{m_H(F_T \cap F_T h^{-1})}{m_H(F_T)} \, \phi_f(h) \, dm_H(h).
\end{equation}
Since $(F_T)$ is a right F\o lner sequence in $H$, 
\[
\lim_{T\to\infty} \frac{m_H(F_T \cap F_T h^{-1})}{m_H(F_T)} = 1, \enskip \textrm{for all $h \in H$},
\]
whence \eqref{stepsigmaf} follows from the Dominated Convergence Theorem if we can show that $\phi_f$ belongs to 
$L^1(H)$. Since the $H$-action on $(X,\mu)$ is exponentially mixing of order two with respect to $d$ and $(\cA,\cN)$, we have 
$$
|\phi_f(h)| \ll_{s} e^{-\delta_2 d(h,e)} N_s(f)^2\quad\hbox{ for all 
$h \in H$ and $s > s_2$,}
$$
where the implicit constant is independent of $h$. Hence, the following lemma finishes the proof.

\begin{lemma}
\label{integrable}
If $\phi$ is a complex-valued measurable function on $H$ such that for some 
$C, \alpha > 0$,
\begin{equation}
\label{eq:exp}
|\phi(h)| \leq C\,e^{-\alpha d(h,e)},\quad h\in H,
\end{equation}
then $\phi$ belongs to $L^1(H)$.
\end{lemma}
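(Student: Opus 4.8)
The plan is to show that the exponential decay of $|\phi|$ against the metric ball radius, combined with the sub-exponential growth of $(H,d)$, forces integrability. The one subtlety is that Lemma \ref{integrable} as stated does \emph{not} assume sub-exponential growth of $H$ — it is stated for an arbitrary locally compact second countable group with a left-invariant proper metric. So the real content must be that a proper left-invariant metric on a locally compact group automatically has \emph{at most exponential} volume growth, and then exponential decay at a fast enough... wait — but no rate hypothesis relating $\alpha$ to the growth rate is imposed either. Let me reconsider: the intended reading is surely that $d$ is \emph{proper} (balls are compact, as assumed at the start of Section~\ref{standingass}... actually properness of $d$ is stated in the main-result subsection for $(H,d)$ of sub-exponential growth), and in the application $H$ has sub-exponential growth, so in fact any $\alpha>0$ suffices. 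I would therefore prove the lemma under the running assumption that $(H,d)$ has sub-exponential growth, which is in force throughout Section~\ref{sec:thmmain0}.

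First I would slice $H$ into the dyadic annuli $A_n = B_d(e,n+1)\setminus B_d(e,n)$ for $n\ge 0$, which cover $H$ since $d$ is proper (every point lies in some ball). On $A_n$ we have the pointwise bound $|\phi(h)|\le C e^{-\alpha n}$, so
\[
\int_H |\phi|\, dm_H \;\le\; \sum_{n\ge 0} C\, e^{-\alpha n}\, m_H(A_n) \;\le\; \sum_{n\ge 0} C\, e^{-\alpha n}\, m_H(B_d(e,n+1)).
\]
By sub-exponential growth \eqref{defsubexp}, for the given $\alpha$ there is $n_0$ such that $m_H(B_d(e,n+1)) \le e^{(\alpha/2)(n+1)}$ for all $n\ge n_0$; hence the tail of the series is dominated by $C'\sum_{n\ge n_0} e^{-\alpha n/2}<\infty$, while the finitely many initial terms are finite because closed balls are compact and $m_H$ is a Radon measure. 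This proves $\phi\in L^1(H)$.

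The only point requiring a little care — and the one I would flag as the potential obstacle — is justifying that the annuli are measurable and of finite measure and genuinely exhaust $H$: measurability is automatic since $d(\cdot,e)$ is continuous and the sets are Borel; finiteness of $m_H(B_d(e,n+1))$ follows from properness (compact balls) together with local finiteness of Haar measure; and exhaustion follows again from properness, since every $h\in H$ has $d(h,e)<\infty$. With these in hand the estimate above is routine. (If one wished to drop the sub-exponential growth hypothesis, the statement would be false without a compatibility assumption between $\alpha$ and the growth rate, so the sub-exponential setting is exactly the one in which a clean statement holds, which is all that is needed for \eqref{stepsigmaf}.)
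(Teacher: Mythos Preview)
Your proof is correct and follows essentially the same approach as the paper: slice $H$ into annuli $B_d(e,n+1)\setminus B_d(e,n)$, bound the integral by $C\sum_{n\ge 0} e^{-\alpha n}\, m_H(B_d(e,n+1))$, and invoke sub-exponential growth to conclude convergence. Your observation that the sub-exponential growth hypothesis is implicit (being the running assumption of Section~\ref{sec:thmmain0}) rather than stated in the lemma is also exactly right.
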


\begin{proof}
Using \eqref{eq:exp}, we obtain
\[
\int_{H\backslash\{0\}} |\phi(h)| \, dm_H(h) = \sum_{n \geq 0} \int_{B_d(e,n+1) \setminus B_d(e,n)}  |\phi(h)| \, dm_H(h)
\leq C \sum_{n \geq 0} \beta_n e^{-\alpha n}, 
\]
where $\beta_n = m_H(B_d(e,n+1))$. Since $(H,d)$ has sub-exponential growth, $\beta_n^{1/n} \ra 1$, 
which readily implies that the series converges.
\end{proof}

\section{Proof of Corollary \ref{cor} assuming Theorem \ref{main}}
\label{sec:cormain}
Recall our assumptions on $L, \Gamma, G, Y$ and $\mu_Y$ from Corollary \ref{cor}, and let $u(t)$ be a non-trivial unipotent one-parameter subgroup of $G$, and $(\lambda_k)$ a sequence in $\bR_{+}$ such that for some $\theta > 1$,
\begin{equation}
\label{lac}
\lambda_{k+1} \geq \theta \lambda_k, \quad \textrm{for all $k$}. 
\end{equation}
We shall apply Theorem \ref{main} to the sequence $(\nu_N)$ of positive and finite measures on $H$ 
defined by 
\begin{equation}
\label{Case2}
\nu_N = \frac{1}{\sqrt{N}} \sum_{k=1}^{N} \delta_{u({\lambda_k})^{-1}}.
\end{equation}
One readily checks that $\|\nu_N\| = \sqrt{N}$. 

We shall crucially use the easily 
checkable fact
the distance along unipotent subgroups grows at least logarithmically
(see, for instance, \cite[Lem.~2.1]{BEG}):
for every 
fixed choice of such a unipotent subgroup $u(t)$, there exist $c_1,c_2> 0$ such that
\begin{equation}
\label{logdist}
d(u(t),e) \geq c_1\,\log|t|-c_2\quad \hbox{ for all $t\ne 0$.}
\end{equation}
In particular, it follows from the exponential mixing property that
there exists $p>0$ such that for every $s>s_2$ and $f_1,f_2\in \cA$ satisfying $\mu_Y(f_1)=\mu_Y(f_2)=0$,
\begin{equation}
\label{eq:mix_u}
\left<f_1,u(t)\cdot f_2\right> \ll_s |t|^{-p} N_s(f_1)N_s(f_2)\quad \hbox{ for all $t\ne 0$.}
\end{equation}

\subsection{Checking condition \eqref{cond_nuT}}

If we write out condition \eqref{cond_nuT} explicitly in our setting, we see that we must show that for all $r \geq 3$ and $c > 0$,
\[
\frac{1}{N^{r/2}} \sum_{n=1}^N \big| \{ m=1,\ldots,N \, : \, d(u(\lambda_m),u(\lambda_n)) \leq c \log \sqrt{N} \}\big|^{r-1} \ra 0,
\]
as $N \ra \infty$. We deduce from \eqref{logdist} that for $m\ne n$,
$$
d(u(\lambda_m),u(\lambda_n))=d(u(\lambda_m-\lambda_n),e)\ge c_1\,\log|\lambda_m-\lambda_n|-c_2.
$$
Hence, it suffices to show that for every $c,\theta>0$,
\begin{equation}\label{s}
\frac{1}{N^{r/2}} \sum_{n=1}^N \big| \{ m=1,\ldots,N \, : \, |\lambda_m-\lambda_n| \leq c\,N^\theta \}\big|^{r-1} \ra 0.
\end{equation}
Using \eqref{lac}, it is not hard to show that 
$$
\big| \{ m\ge 1 \, : \, |\lambda_m-\lambda_n| \leq c\,N^\theta \}\big|\ll \log N,
$$
where the implied constant is independent of $n$. Then
$$
\sum_{n=1}^N \big| \{ m=1,\ldots,N \, : \, |\lambda_m-\lambda_n| \leq c\,N^\theta \}\big|^{r-1}
\ll N(\log N)^{r-1},
$$
and \eqref{s} is immediate since $r \geq 3$.

\subsection{Calculating the variance}

Take $f \in \cA$ with $\mu_Y(f)=0$. If we expand $\| \nu_N * f\|_{L^2}^2$, we get
\[
\|f\|_{L^2}^2 + \frac{2}{N} \sum_{1 \leq m < n \leq N} \langle f, u(\lambda_n-\lambda_m) \cdot f \rangle.
\]
We wish to prove that the second term tends to zero as $N \ra \infty$. By \eqref{eq:mix_u}, 
we have for all $s > s_2$,
\[
\langle f, u(\lambda_n-\lambda_m) \cdot f \rangle \ll_{s} \frac{1}{|\lambda_n - \lambda_m|^p} N_s(f)^2.
\]
It thus suffices to show that 
\[
\sum_{1 \leq m < n < \infty} \frac{1}{|\lambda_n - \lambda_m|^p} = \sum_{m=1}^{\infty} \sum_{k=1}^\infty \frac{1}{|\lambda_{m+k}-\lambda_m|^p}< \infty.
\]
Since $|\lambda_{m+k}-\lambda_m| \geq \lambda_m (\theta^k - 1)$ by \eqref{lac}, this follows from the finiteness of the series
\[
\sum_{m=1}^\infty \frac{1}{\lambda_m^p} \qand \sum_{k=1}^\infty \frac{1}{(\theta^{k}-1)^p}.
\]

\section{An outline of the proof of Theorem \ref{main}}
\label{sec:outlineproofmain}

Our proof of Theorem \ref{main} makes use of the classical \emph{cumulant method}, in essence due 
to Fr\'echet and Shohat in \cite{FS}. We shall briefly summarize its main steps below. \\

Let $(X,\mu)$ be a probability measure space and $r \geq 2$ an integer. Denote by $[r]$
the set $\{1,\ldots,r\}$. A \emph{cyclically ordered partition} $\cP$ of the set $[r]$ is a partition $\{I_1,\ldots,I_k\}$ of $[r]$
into non-empty subsets $I_1,\ldots,I_k$, where the cyclic order of $I_1,\ldots,I_k$ is also taken into account; for instance, 
if $[r] = I_1 \sqcup I_2, \sqcup I_3$, then $\{I_1,I_2,I_3\}$ and $\{I_2,I_3,I_1\}$ are viewed as the same \emph{cyclically ordered} partition, while $\{I_1,I_2,I_3\}$ and $\{I_2,I_1,I_3\}$ are viewed as different partitions, since the associated orders $(123)$ and
$(213)$ are not cyclic permutations of each other. We denote by $\mathfrak{P}_{[r]}$ the
set of all cyclically ordered partitions of $[r]$.

Given an $r$-tuple $(f_1,\ldots,f_r)$ in $L^\infty(X,\mu)$ and a subset $I \subset [r]$, we define
\[
f_I = \prod_{i \in I} f_i, \quad \textrm{for $\emptyset \neq I \subset [r]$},
\]
and the \emph{joint cumulant $\cum_{[r]}(f_1,\ldots,f_r)$} of order $r$ by
\begin{equation}
\label{defcumbracketr}
\cum_{[r]}(f_1,\ldots,f_r) = \sum_{\cP \in \mathfrak{P}_{[r]}} (-1)^{|\cP|-1} \prod_{I \in \cP} \mu\big(f_I),
\end{equation}
where $|\cP|$ denotes the number of partition elements in $\cP$. If $f \in L^\infty(X,\mu)$, we define $\cum_r(f)$, 
the \emph{cumulant of $f$ of order $r$}, to be
\begin{equation}
\label{defcumr}
\cum_r(f) = \cum_{[r]}(f,\ldots,f).
\end{equation}
The utility of cumulants for problems pertaining to Central Limit Theorems is well-known; we shall use the following
classical criterion, which can be deduced from the results in \cite{FS}.

\begin{proposition}[Cumulants and CLT]
\label{CLT}
Let $(Z_T)$ be a sequence of  real-valued, bounded and measurable functions on $(X,\mu)$
satisfying $\mu(Z_T)=0$. 
If 
\begin{equation}
\label{rgeq3}
\lim_{T\to\infty} \cum_r(Z_T) = 0, \quad \textrm{for all $r \geq 3$},
\end{equation}
and the limit
\begin{equation}
\label{req2}
\sigma^2 := \lim_{T\to\infty} \|Z_T\|_{L^2}^2 < \infty
\end{equation}
exists, then 
$$
Z_T \implies N(0,\sigma^2)\quad\hbox{ as $T\to\infty$.}
$$
\end{proposition}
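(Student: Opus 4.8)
The plan is to deduce Proposition \ref{CLT} from the classical method of moments, the bridge being the combinatorial inversion between cumulants and moments. First I would record the moment--cumulant relations. Setting $f_1 = \cdots = f_r = Z_T$ in \eqref{defcumbracketr} and grouping the cyclically ordered partitions according to their underlying (unordered) set partition --- a set partition into $k$ blocks carries $(k-1)!$ distinct cyclic orders --- one sees that $\cum_r(Z_T)$ coincides with the classical joint cumulant $\sum_{\pi}(-1)^{|\pi|-1}(|\pi|-1)!\prod_{B\in\pi}\mu(Z_T^{|B|})$, the sum running over all set partitions $\pi$ of $[r]$. By M\"obius inversion on the partition lattice this relation inverts: there is a universal polynomial $P_r$ with rational coefficients, independent of $T$ and of $(X,\mu)$, such that
\[
\mu(Z_T^r) = P_r\big(\cum_1(Z_T),\ldots,\cum_r(Z_T)\big).
\]

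Next I would pass to the limit. By hypothesis $\cum_1(Z_T) = \mu(Z_T) = 0$, while $\cum_2(Z_T) = \mu(Z_T^2) = \|Z_T\|_{L^2}^2 \to \sigma^2$ by \eqref{req2}, and $\cum_r(Z_T)\to 0$ for every $r \geq 3$ by \eqref{rgeq3}. Feeding this into the polynomial identity gives $\mu(Z_T^r)\to P_r(0,\sigma^2,0,\ldots,0) =: m_r$ for every $r\geq 1$. Since $(0,\sigma^2,0,0,\ldots)$ is exactly the cumulant sequence of the Gaussian law $N(0,\sigma^2)$, the limit $m_r$ is precisely its $r$-th moment; explicitly $m_{2k} = (2k-1)!!\,\sigma^{2k}$ and $m_{2k+1} = 0$.

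Finally I would invoke the method of moments. If $\sigma = 0$, then $\|Z_T\|_{L^2}^2 \to 0$, so $Z_T \to 0$ in probability and hence in distribution, which is the claim since $N(0,0) = \delta_0$. If $\sigma > 0$: the bound $\sup_T \mu(Z_T^2) < \infty$ and Chebyshev's inequality make the family of laws of $Z_T$ tight; along any subsequence one extracts a weakly convergent sub-subsequence with limit $\nu$, and since the $(r+1)$-st moments remain bounded, the $r$-th moments pass to the limit, so $\nu$ has moment sequence $(m_r)$. The moments $(m_r)$ of $N(0,\sigma^2)$ satisfy Carleman's condition (indeed $m_{2k}^{1/(2k)}\asymp \sigma\sqrt{k}$, so $\sum_k m_{2k}^{-1/(2k)} = \infty$), hence $N(0,\sigma^2)$ is the unique probability measure with these moments and $\nu = N(0,\sigma^2)$. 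As every subsequential limit equals $N(0,\sigma^2)$, the whole sequence converges, i.e. $Z_T \implies N(0,\sigma^2)$. This is exactly the content of the Fr\'echet--Shohat theorem \cite{FS}.

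Nothing in this argument is genuinely deep: the two substantive inputs are the moment--cumulant inversion and the uniqueness of the Gaussian moment problem, both classical. The one place to be careful is the normalization in \eqref{defcumbracketr}: one must check that summing over \emph{cyclically ordered} partitions with weight $(-1)^{|\cP|-1}$ reproduces the standard $(|\pi|-1)!$-weighted cumulant, so that the limiting moments $m_r$ are those of $N(0,\sigma^2)$ itself and not of a rescaled Gaussian. I expect this bookkeeping, rather than any analytic difficulty, to be the only real point requiring attention.
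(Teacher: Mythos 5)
Your argument is correct and is precisely the route the paper intends: the paper states Proposition \ref{CLT} without proof, citing Fr\'echet--Shohat \cite{FS}, and your write-up supplies the standard deduction (moment--cumulant inversion, convergence of moments to those of $N(0,\sigma^2)$, tightness, and Carleman uniqueness). You also correctly handle the one genuinely non-routine point, namely that summing over \emph{cyclically ordered} partitions with weight $(-1)^{|\cP|-1}$ in \eqref{defcumbracketr} reproduces the classical $(|\pi|-1)!$-weighted cumulant, so the limiting moment sequence is that of $N(0,\sigma^2)$ itself.
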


\subsection{Main proposition}

Recall our standing assumptions concerning $H, d, (X,\mu), \cA$ and the norms $(N_s)$ from Subsection \ref{standingass}. 
In particular, the meaning of the numerical sequences $(s_r)$ and $(\delta_r)$ has been fixed.
Let $(\nu_T)$ be a sequence of positive and finite measures on $H$. Given $f \in \cA$, thanks to Proposition \ref{CLT}, 
the proof of the CLT for 
the sequence $Z_T := \nu_T * f$ is essentially reduced to the asymptotic vanishing of $\cum_r(\nu_T * f)$ for $r \geq 3$. Using
our assumptions on $(\nu_T)$ in Theorem \ref{main}, this will be deduced from the following proposition.

\begin{proposition}[Estimating cumulants]
\label{mainprop}
For all $r \geq 3$ and $s > s_r + r$, there exists $c_{r,s} > 0$ such that for all $\gamma > 0$, $T > 0$ and $f \in \cA$,
\begin{equation}
\label{cumbnd0}
|\cum_r(\nu_T * f)| \ll_{r,s}
 \Big(\int_H \nu_T(B_d(h,c_{r,s} \gamma))^{r-1} \, d\nu_T(h) + e^{-\delta_r \gamma} \|\nu_T\|^r \Big) 
 \, N_s(f)^r
\end{equation}
where the implicit constant depends only on $r$ and $s$.
\end{proposition}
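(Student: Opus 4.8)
The plan is to expand $\cum_r(\nu_T * f)$ directly from the definition \eqref{defcumbracketr}, and then organize the resulting integral over $H^r$ according to how ``clustered'' the tuple $\underline{h}=(h_1,\ldots,h_r)$ is. Since $\nu_T*f = \int_H (h\cdot f)\, d\nu_T(h)$, multilinearity of the joint cumulant gives
\[
\cum_r(\nu_T * f) = \int_{H^r} \cum_{[r]}(h_1\cdot f,\ldots,h_r\cdot f)\, d\nu_T(h_1)\cdots d\nu_T(h_r).
\]
The key analytic input is a pointwise bound on the integrand $\cum_{[r]}(h_1\cdot f,\ldots,h_r\cdot f)$ in terms of the geometry of $\underline h$. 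Here is where I expect to use a fundamental feature of cumulants: the joint cumulant $\cum_{[r]}(g_1,\ldots,g_r)$ vanishes (or is exponentially small) whenever the index set $[r]$ splits into two blocks $S\sqcup S^c$ such that the variables indexed by $S$ are ``independent'' of those indexed by $S^c$. Concretely, if $\underline h$ can be partitioned into two groups that are $d$-separated by a distance $\geq 2c\gamma$ (for a suitable constant $c=c_{r,s}$), then exponential mixing of all orders — applied cleverly to the various products $\mu(f_I)$ appearing in \eqref{defcumbracketr} — forces near-cancellation, yielding a bound of the form $e^{-\delta_r\gamma}\,\|f\|$-type for the integrand. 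This cancellation lemma (an exponentially-mixing analogue of the classical fact that cumulants vanish under independence) is, I expect, the technical heart of the matter, and I anticipate it is precisely what the paper isolates as Proposition \ref{prop_offdiagonal} (and its consequences via conditional cumulants). For tuples that are \emph{not} so separated, I would simply use the crude bound $|\cum_{[r]}(h_1\cdot f,\ldots,h_r\cdot f)| \ll_{r,s} \prod_i \|h_i\cdot f\|_{L^\infty} \ll_{r,s} \|f\|_{L^\infty}^r \ll_{r,s} N_s(f)^r$, using \eqref{unibnd}; the point is that this crude bound is only applied on a small set.

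Next I would make the dichotomy quantitative. Call a tuple $\underline h\in H^r$ ``$\gamma$-clustered'' if the graph on $[r]$ whose edges are the pairs $\{i,j\}$ with $d(h_i,h_j)\leq 2c_{r,s}\gamma$ is connected, and ``$\gamma$-split'' otherwise. On $\gamma$-split tuples the cancellation lemma gives an integrand bounded by $\ll_{r,s} e^{-\delta_r\gamma} N_s(f)^r$ (possibly after absorbing an $H$-boundedness factor \eqref{grpbnd} that is itself dominated once one exploits $\delta_r < r\sigma_s$ — though in fact the mixing estimate should be applied between the separated blocks so that no growth factor appears), and integrating over all of $H^r$ contributes at most $e^{-\delta_r\gamma}\,\|\nu_T\|^r\, N_s(f)^r$, which is the second term in \eqref{cumbnd0}. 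On $\gamma$-clustered tuples, I bound the integrand crudely by $N_s(f)^r$ and estimate the measure of the clustered set: if $[r]$ is connected via edges of length $\leq 2c_{r,s}\gamma$, then fixing (say) the coordinate $h_{i_0}$ with the smallest index, each of the remaining $r-1$ coordinates lies within distance at most $2(r-1)c_{r,s}\gamma$ of $h_{i_0}$ (walking along the spanning tree), hence within a ball $B_d(h_{i_0}, c'_{r,s}\gamma)$; after relabelling $c'_{r,s}$ as $c_{r,s}$, the $\nu_T^{\otimes r}$-measure of the clustered set is $\leq r\int_H \nu_T(B_d(h,c_{r,s}\gamma))^{r-1}\, d\nu_T(h)$. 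That is the first term in \eqref{cumbnd0}. Combining the two contributions gives the claimed bound, with the implicit constant absorbing the combinatorial factor $r$ and the constants from \eqref{unibnd} and Definition \ref{DEFmixk}.

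The main obstacle, as indicated, is establishing the cancellation lemma: one must show that the \emph{full alternating sum} \eqref{defcumbracketr}, not just an individual term, is exponentially small when $\underline h$ is $\gamma$-split. The naive approach of bounding each $\prod_{I\in\cP}\mu(f_I)$ individually fails, because each such product is generically of order $1$; the smallness comes only from combinatorial cancellation among partitions, refined by the mixing estimate. I expect the clean way to handle this — and the reason the paper emphasizes a ``relativization of the method of cumulants'' — is to pass to \emph{conditional} cumulants with respect to a sub-$\sigma$-algebra adapted to one of the separated blocks: writing $[r] = S \sqcup S^c$ with $\dist(\{h_i\}_{i\in S}, \{h_j\}_{j\in S^c}) \geq 2c_{r,s}\gamma$, one uses the multiplicativity/tower properties of conditional cumulants to reduce $\cum_{[r]}$ to a sum of products of cumulants internal to $S$ and internal to $S^c$ (which would vanish for genuinely independent blocks), plus an error governed by the decay of correlations between the two blocks, i.e.\ by $e^{-\delta_r\gamma}$. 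Making this reduction rigorous — tracking how conditional cumulants over the relevant blocks combine, and controlling the error uniformly in $f$ via the seminorms $N_s$ and the structural axioms \eqref{mono}--\eqref{Leibniz} — is the step I would budget the most effort for; everything else (the clustered/split dichotomy, the spanning-tree measure estimate, assembling \eqref{cumbnd0}) is routine bookkeeping once that lemma is in hand.
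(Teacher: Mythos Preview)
Your overall architecture is right and matches the paper: expand $\cum_r(\nu_T*f)$ as an integral of $\cum_{[r]}(\psi_{f,\underline h})$ over $H^r$, use the vanishing of cumulants under independence (implemented via the ``conditional'' function $\psi^{\cQ}$, exactly as in Propositions \ref{prop_cond}--\ref{prop_condiszero}) to bound the integrand on well-separated tuples, and treat the remaining clustered tuples by the trivial $\|f\|_{L^\infty}^r$ bound together with the spanning-tree inclusion into a single ball. The gap is in your treatment of the split case, and it is not cosmetic.

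When $[r]=S\sqcup S^c$ with $d(h_i,h_j)>\tau$ for all $i\in S$, $j\in S^c$ (your ``$\gamma$-split'' with threshold $\tau=2c_{r,s}\gamma$), the mixing hypothesis \eqref{defmixk} does \emph{not} give you a bound depending only on the inter-block separation. To compare $\psi_{f,\underline h}(I)$ with $\psi_{f,\underline h}(I\cap S)\,\psi_{f,\underline h}(I\cap S^c)$ you must collapse each block into a single translated function $h_{i_J}\cdot f_J$ with $f_J=\prod_{j\in I\cap J} (h_{i_J}^{-1}h_j)\cdot f$, and then \eqref{defmixk} is applied to the tuple $(h_{i_J})_J$. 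The price is $\prod_J N_{s'}(f_J)$, and by \eqref{Leibniz} and \eqref{grpbnd} this costs $e^{r\sigma_s\,\alpha}$ where $\alpha$ is the maximal block diameter (this is exactly the computation in Lemma \ref{lemma_estimate_I}). In your single-threshold scheme the blocks are connected components of a graph with edge length $\le\tau$, so $\alpha$ can be as large as $(r-1)\tau$ while the inter-block separation $\beta$ is only $>\tau$; the net exponent is $\beta\delta_r-r\alpha\sigma_s\le \tau(\delta_r-r(r-1)\sigma_s)<0$ by the standing assumption $\delta_r<r\sigma_s$. So the bound blows up rather than decays, and your parenthetical ``no growth factor appears'' is incorrect: the growth factor is forced by the form of \eqref{defmixk}.

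The paper's fix is to replace your single threshold by a hierarchy of scales $0=\beta_0<\beta_1<3\beta_1<\beta_2<\cdots<\beta_r$ (Proposition \ref{prop_exhaust}): every $\underline h$ either lies in $\Delta(\beta_r)$ (all coordinates within $\beta_r$ of each other --- the clustered set) or in some $\Delta_{\cQ}(3\beta_j,\beta_{j+1})$ with $|\cQ|\ge2$, where now the block diameter $\alpha=3\beta_j$ is \emph{much smaller} than the separation $\beta=\beta_{j+1}$. Choosing the $\beta_j$ recursively so that $\beta_{j+1}\delta_r-3r\beta_j\sigma_s=\delta_r\gamma$ makes every off-diagonal piece contribute $e^{-\delta_r\gamma}$, while $\beta_r$ ends up bounded by $c_{r,s}\gamma$ with $c_{r,s}=\sum_{j=0}^{r-1}(3r\sigma_s/\delta_r)^j$, which is where the constant in \eqref{cumbnd0} comes from. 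Once you insert this multi-scale exhaustion in place of your clustered/split dichotomy, the rest of your outline goes through verbatim.
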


\subsection{Proof of Theorem \ref{main} assuming Theorem \ref{mainprop}}

Fix $f \in \cA$. By Proposition \ref{CLT}, applied to $Z_T = \nu_T * f$, we need to show that the limit
\[
\sigma_f^2 = \lim_{T\to\infty} \|\nu_T *f \|_{L^2}^2 
\]
exists, and
\[
\lim_{T\to\infty} \cum_r(\nu_T * f) = 0, \enskip \textrm{for all $r \geq 3$}.
\]
The existence of the first limit is assumed in Theorem \ref{main}, so we only need to consider the 
second kind of limits. Fix $r \geq 3$, and choose $\gamma = \eps \log \|\nu_T\|$ in Proposition \ref{mainprop}
for some $\eps > r/\delta_r$. Then, \eqref{cumbnd0} yields for all $s > s_r + r$,
\[
|\cum_r(\nu_T * f)| \ll_{r,s}
\Big(\int_H \nu_T(B_d(h,c_{r,s} \eps \log \|\nu_T\|))^{r-1} \, d\nu_T(h) + \|\nu_T\|^{-(\delta_r \eps - r)} \Big) 
 \, N_s(f)^r.
\]
Applying our assumption \eqref{cond_nuT} with $c = c_{r,s} \eps$,
we conclude that this quantity tends to zero as $T \ra \infty$, since $\|\nu_T\| \ra \infty$. 

\section{An outline of the proof of Proposition \ref{mainprop}}

Throughout this section, we retain our assumptions on $H, d, (X,\mu), \cA$ and the norms $(N_s)$.
We shall further fix an integer $r \geq 3$, and write $[r]$ for the set $\{1,\ldots,r\}$.

\subsection{Rewriting cumulants}
\label{subsec:rewrite}
Let $(\nu_T)$ be a sequence of positive and finite measures on $H$. For any $f \in \cA$, we see that
\[
\cum_r(\nu_T * f) = \int_{H^r} \cum_{[r]}(h_1 \cdot f,\ldots,h_r \cdot f) \, d\nu_T^{\otimes r}(h_1,\ldots,h_r).
\] 
Furthermore, for every fixed $\underline{h} = (h_1,\ldots,h_r) \in H^r$, we can write every joint cumulant 
of the form $\cum_{[r]}(h_1 \cdot f,\ldots, h_r \cdot f)$ as 
\[
\sum_{\cP\in \mathfrak{P}_{[r]}} (-1)^{|\cP|-1} \prod_{I \in \cP} \psi_{f,\underline{h}}(I) ,
\]
where
\[
\psi_{f,\underline{h}}(I) = \mu\big( \prod_{i \in I} h_i \cdot f \big), \quad \textrm{for $I \subset [r]$}.
\]
We shall now adopt the following notational convention. If $\psi$ is a real-valued function defined on the set $2^{[r]}$
of all subsets of $[r]$ with $\psi(\emptyset) = 1$, then we define its \emph{cumulant} as
\begin{equation}
\label{defcumpsi}
\cum_{[r]}(\psi) = \sum_{\cP\in \mathfrak{P}_{[r]}} (-1)^{|\cP|-1} \prod_{I \in \cP} \psi(I),
\end{equation}
so that $\cum_{[r]}(\psi_{f,\underline{h}}) = \cum_{[r]}(h_1 \cdot f,\ldots,h_r \cdot f)$. With this convention, we now have
\begin{eqnarray}
\cum_r(\nu_T * f) 
&=& 
\int_{H^r} \Big( \sum_{\cP\in \mathfrak{P}_{[r]}} (-1)^{|\cP|-1} \prod_{I \in \cP} \psi_{f,\underline{h}}(I) \Big) \, d\nu^{\otimes r}_T(\underline{h}) \nonumber \\
&=&
\int_{H^r} \cum_{[r]}(\psi_{f,\underline{h}}) \, d\nu^{\otimes r}_T(\underline{h}). \label{altexp}
\end{eqnarray}
In what follows, we shall estimate $\cum_{[r]}(\psi_{f,\underline{h}})$ for ``well-separated'' $r$-tuples $\underline{h} = (h_1,\ldots,h_r)$, and we shall also show that ``most'' $r$-tuples are well-separated on suitable scales. In order to make the notions of ``well-separateness''
and ``most'' more precise, we must first introduce some additional notation.

\subsection{Well-separated $r$-tuples}
\label{subsec:wellsep}
If $I, J \subset [r]$ and $\underline{h} = (h_1,\ldots,h_r) \in H^r$,
we set
\begin{equation}
\label{defdupI}
d^{I}(\underline{h}) = \max\big\{ d(h_i,h_j) \, : \, i,j \in I \big\},
\end{equation}
and
\[
d_{I,J}(\underline{h}) = \min\big\{ d(h_i,h_j) \, : \, i \in I, \enskip j \in J \big\}.
\]
If $\cQ$ is a partition of $[r]$, we set
\begin{equation}\label{eq:dQ}
d^{\cQ}(\underline{h}) = \max\big\{ d^{I}(\underline{h}) \, : \, I \in \cQ \big\}
\end{equation}
and
\[
d_{\cQ}(\underline{h}) = \min\big\{ d_{I,J}(\underline{h}) \, : \, I \neq J, \enskip I, J \in \cQ \big\}.
\]
Two extremal cases of this notation will be of special interest. We note that if we write $\cK_r$ for the partition of $[r]$ into points, then
\[
d_{\cK_r}(\underline{h}) = \min\big\{ d(h_i,h_j) \, : \, 1\le i \neq j \le r\big\},
\]
which we have previously also denoted by $d_r(\underline{h})$. At the other extreme, if $\{ [r] \}$ denotes the 
partition into one single block, then
\[
d^{\{[r]\}}(\underline{h}) = \max\big\{ d(h_i,h_j) \, : \, 1\le i, j\le r \big\}.
\]
In order to ease the
somewhat heavy notation, we set
$d^{r} = d^{\{[r]\}}.$
For $\beta>0$, we set
$$
\Delta(\beta)= 
\big\{ 
\underline{h} \in H^r \, : \, 
d^{r}(\underline{h}) \leq \beta \big\}.
$$
We note that for all $h_1 \in H$, the set
$$
\Delta(\beta)_{h_1} := \big\{ (h_2,\ldots,h_r) \, : \, (h_1,\ldots,h_r) \in \Delta(\beta) \big\}  
$$
satisfies
\begin{equation}
\label{almostdiaguse}
\Delta(\beta)_{h_1}
\subset B_d(h_1,\beta)^{r-1}.
\end{equation}
Given a partition $\cQ$ of $[r]$, and $0 \leq \alpha < \beta$, we define
\begin{equation}
\label{defddownQ}
\Delta_{\cQ}(\alpha,\beta) 
= 
\big\{ 
\underline{h} \in H^r \, : \, 
d^{\cQ}(\underline{h}) \leq \alpha, 
\qen
d_{\cQ}(\underline{h}) > \beta \big\}.
\end{equation}
We shall think of the elements in $\Delta_{\cQ}(\alpha,\beta)$ for some partition $\cQ$ with $|Q|\ge 2$ and $0 \le \alpha < \beta$ as being
``well-separated'', while we think of the elements in $\Delta(\beta)$ as being ``clustered''.  
\subsection{Main propositions}

Our first proposition roughly asserts that the joint cumulants $\cum_{[r]}(h_1 \cdot f,\ldots,h_r \cdot f)$ are ``small'' 
for all ``well-separated'' $r$-tuples $\underline{h} = (h_1,\ldots,h_r)$.

\begin{proposition}[Separated tuples]
\label{prop_offdiagonal}
Let $\cQ$ be a partition of $[r]$ with $|\cQ| \geq 2$, and fix $0 \leq \alpha < \beta$ and an integer $s > s_r + r$.
Then, for every $\underline{h} \in \Delta_{\cQ}(\alpha,\beta)$ and $f \in \cA$,
we have
\begin{equation}
\label{offdiagonal}
|\cum_{[r]}(\psi_{f,\underline{h}})| \ll_{r,s} e^{-(\beta \delta_r - r \alpha \sigma_s)} \, N_s(f)^r,
\end{equation}
where the implicit constant depends only on $r$ and $s$.
\end{proposition}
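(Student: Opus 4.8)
The plan is to exploit the combinatorial structure of cumulants together with exponential $r$-mixing. Fix a partition $\cQ = \{Q_1,\ldots,Q_m\}$ of $[r]$ with $m = |\cQ| \geq 2$, and fix $\underline{h} \in \Delta_\cQ(\alpha,\beta)$. The key observation is that since $d^\cQ(\underline{h}) \leq \alpha$, within each block $Q_\ell$ all the points $h_i$, $i \in Q_\ell$, lie within distance $\alpha$ of each other, so each product $\prod_{i \in Q_\ell} h_i \cdot f$ is a single element $g_\ell \in \cA$ obtained by translating a product of at most $r$ copies of $f$. Using the almost-multiplicativity \eqref{Leibniz} iteratively (costing at most $r-1$ applications, raising the Sobolev index by at most $r$) together with the $H$-boundedness \eqref{grpbnd} to absorb the translations, one gets $N_s(g_\ell) \ll_{r,s} e^{\sigma_s \alpha |Q_\ell|} N_{s+r}(f)^{|Q_\ell|}$, hence, taking $s$ slightly larger than required and relabelling, $\prod_\ell N_s(g_\ell) \ll_{r,s} e^{r \sigma_s \alpha} N_s(f)^r$. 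Meanwhile the $m$ representatives (one per block) are pairwise at distance $> \beta = d_\cQ(\underline{h})$.

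Next I would recognize $\cum_{[r]}(\psi_{f,\underline{h}})$ as an ordinary joint cumulant in the block-representatives. Because cumulants are additive over products only in a controlled way, the precise statement I want is the standard fact that the cumulant $\cum_{[r]}(\psi_{f,\underline{h}})$ equals a signed sum over cyclically ordered partitions $\cP$ of $[r]$ of $\prod_{I \in \cP} \mu(f_I \text{ translated})$, and that this sum vanishes identically unless every block of $\cQ$ is ``connected'' through $\cP$ — more precisely, the only surviving contribution, after grouping, is that of the coarsened cumulant $\cum_{\{Q_1,\ldots,Q_m\}}(g_1,\ldots,g_m)$ of the $m$ functions $g_\ell$. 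This is the combinatorial heart: one uses that replacing each $\psi(I)$ by $\prod_{\ell}\mu(\prod_{i\in I\cap Q_\ell} h_i\cdot f)$ — which would be exact if the functions across blocks were independent — produces a cumulant that factors block-wise and cancels against partitions refining $\cQ$; the remainder is governed by the deviation from independence, which is exactly what exponential $r$-mixing controls. A clean way to organize this is: write each $\mu(f_I)$ appearing in \eqref{defcumpsi} using \eqref{defmixk} as $\prod_{\ell}\mu(f_{I\cap Q_\ell}) + (\text{error} \ll e^{-\delta_r d_r(\cdot)} \prod N_s)$, and note that for the terms appearing the relevant separation is at least $d_\cQ(\underline{h}) > \beta$ whenever $I$ meets at least two blocks; the ``main terms'' in which every $I$ meets only one block sum to a product of block-wise cumulants over a non-connected partition structure, which one shows vanishes by the classical vanishing property of cumulants (the cumulant of a tuple that splits into two mutually ``independent'' sub-tuples is zero).

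Finally I would assemble the estimate. After the cancellation, each surviving term is a product over the at most $|\mathfrak{P}_{[r]}|$ (a constant depending only on $r$) cyclically ordered partitions of an error of size $\ll_{r,s} e^{-\delta_r \beta} \prod_i N_s(h_i \cdot f)$; combining with the bound $\prod_i N_s(h_i\cdot f) \ll_{r,s} e^{r\sigma_s \alpha} N_s(f)^r$ from \eqref{grpbnd} and \eqref{Leibniz}, and absorbing the combinatorial count into the implicit constant, yields
\[
|\cum_{[r]}(\psi_{f,\underline{h}})| \ll_{r,s} e^{-\delta_r \beta} e^{r \sigma_s \alpha} N_s(f)^r = e^{-(\beta\delta_r - r\alpha\sigma_s)} N_s(f)^r,
\]
as claimed. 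The main obstacle I anticipate is the combinatorial bookkeeping in the middle step: making rigorous the claim that, upon substituting the near-factorization from \eqref{defmixk} into the cumulant sum \eqref{defcumpsi}, the ``fully factored'' main terms cancel by the vanishing property of cumulants on decoupled blocks, leaving only terms each carrying at least one factor $e^{-\delta_r \beta}$. This is presumably where the authors invoke an auxiliary lemma — likely the one they cite about $\cum$ of a function on $2^{[r]}$ vanishing when $\psi$ factors over a nontrivial partition — and I would structure the proof around isolating and citing exactly that statement rather than re-deriving the inclusion–exclusion identity by hand.
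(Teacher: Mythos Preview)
Your approach is essentially the paper's: replace $\psi_{f,\underline{h}}$ by the ``conditioned'' set function $\psi^{\cQ}_{f,\underline{h}}(I) = \prod_{J\in\cQ}\psi_{f,\underline{h}}(I\cap J)$, invoke the vanishing $\cum_{[r]}(\psi^{\cQ}) = 0$ for $|\cQ|\ge 2$ (the paper isolates this as Proposition~\ref{prop_condiszero}, proved via an explicit sign-reversing involution on $\mathfrak{P}_{[r]}$), and then bound each difference $|\widetilde\psi(\cP)-\widetilde\psi^{\cQ}(\cP)|$ by expanding $\prod_{I\in\cP}(A_I+B_I)-\prod_{I\in\cP}B_I$ and estimating $A_I = \psi(I)-\psi^{\cQ}(I)$ via mixing applied to the block functions. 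One slip to repair in your final paragraph: the inequality $\prod_i N_s(h_i\cdot f) \ll e^{r\sigma_s\alpha} N_s(f)^r$ is false as written, since $d(h_i,e)$ is not controlled by $\alpha$; the factor $e^{r\sigma_s\alpha}$ arises exactly as in your first paragraph, by choosing a reference index $i_J$ in each block, writing $f_J=\prod_{j\in I\cap J} (h_{i_J}^{-1}h_j)\cdot f$, and applying \eqref{grpbnd} to the relative shifts $h_{i_J}^{-1}h_j$ (which satisfy $d(h_{i_J}^{-1}h_j,e)\le\alpha$) before invoking \eqref{defmixk} on the tuple $(h_{i_J}\cdot f_J)_J$.
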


Our second proposition roughly shows that we have a lot of flexibility in setting up the thresholds for the notions of 
``well-separated'' and ``clustered''.

\begin{proposition}[Exhausting $H^r$]
\label{prop_exhaust}
For every sequence $(\beta_j)$ with $\beta_o = 0$ and
\begin{equation}
\label{betaj}
0 < \beta_1 < 3 \beta_1 < \beta_2 < \cdots < \beta_{r-1} < 3 \beta_{r-1} < \beta_r,
\end{equation}
we have
\begin{equation}
\label{exhaust}
H^r = \Delta(\beta_r) \cup \Big( \bigcup_{j=0}^{r-1} \bigcup_{|\cQ| \geq 2} \Delta_{\cQ}(3\beta_j,\beta_{j+1}) \Big).
\end{equation}
\end{proposition}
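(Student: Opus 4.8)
The plan is to show that every $\underline{h}\in H^r$ which does \emph{not} lie in the ``clustered'' set $\Delta(\beta_r)$ must lie in one of the ``well-separated'' pieces $\Delta_{\cQ}(3\beta_j,\beta_{j+1})$ for a suitable level $j$ and a suitable partition $\cQ$ with $|\cQ|\ge 2$. Fix such an $\underline{h}$, so $d^r(\underline{h})>\beta_r$. For each threshold $\beta_j$ ($0\le j\le r-1$), I would define a relation on $[r]$ by declaring $i$ and $i'$ equivalent if $d(h_i,h_{i'})\le\beta_j$; this is \emph{not} transitive, but one can still form the partition $\cQ_j$ of $[r]$ generated by it (its transitive closure), i.e. the connected components of the graph on $[r]$ whose edges join pairs at distance $\le\beta_j$. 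Since $\beta_0=0$ and the metric is a genuine metric (positive on distinct points), $\cQ_0=\cK_r$ is the partition into singletons; since $d^r(\underline{h})>\beta_r$, no two distinct $h_i$ are within $\beta_r$ — in particular within $\beta_{r-1}$ — but that does not immediately force $\cQ_{r-1}$ to be coarse, so the right object to track is how $\cQ_j$ coarsens as $j$ increases.

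The key combinatorial step is the pigeonhole argument on the chain $\cQ_0,\cQ_1,\ldots,\cQ_{r-1}$. Each $\cQ_{j+1}$ is a coarsening of $\cQ_j$ (enlarging the threshold can only merge components), and $\cQ_0$ has $r$ blocks. If every coarsening step strictly decreased the number of blocks we would run out after $r-1$ steps; so there must exist an index $j\in\{0,\ldots,r-2\}$ with $\cQ_j=\cQ_{j+1}$ — call this common partition $\cQ$ — \emph{unless} $\cQ_0$ is already strictly refined all the way down, which can only happen if $\cQ_{r-1}$ has a single block. The latter case, however, is excluded: if $\cQ_{r-1}=\{[r]\}$ then $[r]$ is connected through edges of length $\le\beta_{r-1}$, so any two points are joined by a path of at most $r-1$ such edges, giving $d^r(\underline{h})\le (r-1)\beta_{r-1}$; since $\beta_r>3\beta_{r-1}$ does \emph{not} by itself beat $(r-1)\beta_{r-1}$, I will instead use the full gap structure \eqref{betaj} telescopically — this is the point that needs care. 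The honest way is: pick the smallest $j$ such that $\cQ_j$ has a block touching a point outside it at distance $\le\beta_{j+1}$; I expect the cleanest route is to argue directly that there is a stable level $j$ with $\cQ_j=\cQ_{j+1}=:\cQ$ and then verify the two defining inequalities of $\Delta_{\cQ}(3\beta_j,\beta_{j+1})$.

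For the verification, suppose $j$ is a level with $\cQ_j=\cQ_{j+1}=\cQ$. First, $d_{\cQ}(\underline{h})>\beta_{j+1}$: any two points in distinct blocks of $\cQ=\cQ_{j+1}$ are, by definition of $\cQ_{j+1}$ as the components at threshold $\beta_{j+1}$, at distance $>\beta_{j+1}$. Second, $d^{\cQ}(\underline{h})\le 3\beta_j$: fix a block $I\in\cQ=\cQ_j$ and two points $i,i'\in I$; they are joined by a path $i=i_0,i_1,\ldots,i_m=i'$ inside $I$ with consecutive distances $\le\beta_j$, and I claim $m\le 3$, whence $d(h_i,h_{i'})\le 3\beta_j$ by the triangle inequality. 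The bound $m\le 3$ (i.e. $|I|\le$ something controlled, or at least the block has small diameter in graph distance) is where the precise spacing $3\beta_j<\beta_{j+1}$ enters: if a block $I$ of $\cQ_j$ had graph-diameter $\ge 4$ at threshold $\beta_j$ it would have Euclidean diameter $\ge$ ... — here I'd instead run the argument by \emph{choosing $j$ to be the first level at which $\cQ_j$ has a block of metric diameter $>3\beta_{j}$}, and show this forces $\cQ_j=\cQ_{j+1}$ via the gap $3\beta_j<\beta_{j+1}$, reaching the desired $j$. I would invoke Lemma \ref{lemma_coarser} for exactly this monotonicity/coarsening bookkeeping.

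The main obstacle, as the above makes clear, is the bookkeeping that turns the geometric gaps $3\beta_j<\beta_{j+1}$ into a clean pigeonhole: one must choose the level $j$ so that simultaneously the blocks of $\cQ$ have diameter $\le 3\beta_j$ (a statement about the \emph{scale} $\beta_j$) and distinct blocks are $>\beta_{j+1}$-separated (a statement about the \emph{next} scale), and one has only $r-1$ gaps to play with while partitions of $[r]$ can refine through up to $r$ levels. The resolution is that the ``clustered'' alternative $\Delta(\beta_r)$ absorbs exactly the case where the chain $\cQ_0\subsetneq\cdots\subsetneq\cQ_{r-1}$ never stabilizes — in that case $\cQ_{r-1}$ is a single block and a telescoping path bound together with $\beta_r>3\beta_{r-1}>\cdots$ gives $d^r(\underline{h})\le\beta_r$, contradiction; so stabilization always occurs and supplies the required $j$ and $\cQ$. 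I expect the write-up to be short once this dichotomy is stated correctly.
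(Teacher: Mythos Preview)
Your approach has a genuine gap at the diameter bound. You define $\cQ_j$ as the partition into connected components of the graph on $[r]$ with edges $\{i,i'\}$ whenever $d(h_i,h_{i'})\le\beta_j$, and then assert that a block $I\in\cQ_j$ satisfies $d^I(\underline{h})\le 3\beta_j$. This is false: two points in the same component are joined by a chain of at most $|I|-1$ edges of length $\le\beta_j$, so the only bound the construction gives is $d^I(\underline{h})\le (|I|-1)\beta_j\le (r-1)\beta_j$. Your claim ``$m\le 3$'' is not justified and does not follow from $3\beta_j<\beta_{j+1}$. Concretely, take $r=6$, $H=\bR$, $h_i=i-1$ for $i\le 5$ and $h_6$ very large; then $\cQ_1=\{\{1,\dots,5\},\{6\}\}$ already at threshold $\beta_1$ slightly above $1$, and the first block has diameter $4>3\beta_1$. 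Your chain stabilises at $j=1$ (indeed $\cQ_1=\cQ_2$), yet $\underline{h}\notin\Delta_{\cQ_1}(3\beta_1,\beta_2)$. The ``telescoping path bound'' you invoke at the end has the same defect: if $\cQ_{r-1}=\{[r]\}$ you only get $d^r(\underline{h})\le (r-1)\beta_{r-1}$, and the hypothesis $\beta_r>3\beta_{r-1}$ does not dominate this once $r\ge 5$.

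The paper's argument avoids this by \emph{not} taking connected components. It builds the chain $\cQ_0=\cK_r,\cQ_1,\cQ_2,\ldots$ inductively via Lemma \ref{lemma_coarser}: at step $k$ one already knows $d^{\cQ_k}(\underline{h})\le 3\beta_k$, checks whether $d_{\cQ_k}(\underline{h})>\beta_{k+1}$, and if not merges a single pair of close blocks $I,J$ (with $d_{I,J}\le\beta_{k+1}$) to form $\cQ_{k+1}$. The point is that the diameter of $I\cup J$ is then at most $2\cdot 3\beta_k+\beta_{k+1}<3\beta_{k+1}$ by the triangle inequality and the gap $3\beta_k<\beta_{k+1}$; this is where the factor $3$ genuinely enters. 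The inductive control on diameters is exactly what your global connected-component definition loses. Your mention of Lemma \ref{lemma_coarser} is on the right track, but the lemma is the \emph{construction} of the next partition, not a bookkeeping device for the components you defined.
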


\subsection{Proof of Proposition \ref{mainprop} assuming Proposition \ref{prop_offdiagonal} and Proposition \ref{prop_exhaust}}

Fix $s > s_r + r$ and $f \in \cA$, and pick a sequence $(\beta_j)$ such that $\beta_o = 0$, and 
\begin{equation}
\label{betaj}
0 < \beta_1 < 3 \beta_1 < \beta_2 < 3 \beta_2 < \beta_3 < \ldots < \beta_{r-1} < 3 \beta_{r-1} < \beta_{r}.
\end{equation}
By \eqref{altexp} and \eqref{exhaust}, we have that for all $T > 0$, 
\begin{eqnarray}
|\cum_r(\nu_T * f)| &\ll_r & \nu^{\otimes r}_T(\Delta(\beta_r)) \, \|f\|_{L^\infty}^r \nonumber \\
&+& 
\max_{j} \, \max_{|\cQ| \geq 2} \int_{\Delta_\cQ(3\beta_j,\beta_{j+1})} |\cum_{[r]}(\psi_{f,\underline{h}})| \, d\nu^{\otimes r}_T(\underline{h}), \label{cumbnd}
\end{eqnarray}
where the second maximum is taken over all partitions of $[r]$ with at least two partition elements. Recall
from our standing assumptions in Subsection \ref{standingass} that $\|f\|_\infty \ll_s N_s(f)$. Hence, using the inclusion  \eqref{almostdiaguse} for the first term, we see that
\begin{eqnarray}
|\cum_r(\nu_T * f)| &\ll_r & \Big( \int_H \nu_T(B(h,\beta_r))^{r-1} \, d\nu_T(h) \Big) \, N_s(f)^r  \nonumber \\
&+& 
\max_{j} \, \max_{|\cQ| \geq 2} \int_{\Delta_\cQ(3\beta_j,\beta_{j+1})} |\cum_{[r]}(\psi_{f,\underline{h}})| \, d\nu^{\otimes r}_T(\underline{h}). \label{cumbnd2}
\end{eqnarray}
We stress that this inequality is valid for any $f \in \cA$ and sequence $(\beta_j)$ satisfying \eqref{betaj}. It remains to choose
a sequence $(\beta_j)$ so that the second term is as small as possible. \\

Let us now fix $\gamma > 0, T > 0$ once and for all, and set $\beta_o = 0$. For every $j \geq 0$, we pick recursively $\beta_{j+1}$ so that
\begin{equation}
\label{betaj2}
\beta_{j+1} \delta_r - 3r \beta_j \sigma_s = \delta_r \gamma.
\end{equation}
Recall from Subsection \ref{standingass} that we assume that $\delta_r < r \sigma_s$, and thus \eqref{betaj2} in particular implies that $3 \beta_j < \beta_{j+1}$, that is to say, $(\beta_j)$ thus constructed satisfies \eqref{betaj}. By induction, \eqref{betaj2} also implies that
\begin{equation}
\label{defcr}
\beta_r \leq \gamma \, \sum_{j=0}^{r-1} \Big( \frac{3r\sigma_s}{\delta_r} \Big)^j =: \gamma c_{r,s}.
\end{equation}
In what follows, we can choose any sequence $(\beta_j)$ as in \eqref{betaj2} with $\beta_r \le \gamma c_{r,s}$. \\

We fix a partition $\cQ$ of $[r]$ with $|\cQ| \geq 2$ and an index $j$. By Proposition \ref{prop_offdiagonal}, we know that for all ``well-separated'' $r$-tuples $\underline{h} \in \Delta_{\cQ}(3\beta_j,\beta_{j+1})$,
\[
|\cum_{[r]}(\psi_{f,\underline{h}})| \ll_{r,s} e^{-(\beta_{j+1} \delta_r - 3r \beta_j \sigma_s)} \, N_s(f)^r
= e^{-\delta_r \, \gamma} \, N_s(f)^r,
\]
where the last equality follows from \eqref{betaj2}. We stress that the right hand side is independent of both $\cQ$ and $j$, and
thus it follows from \eqref{cumbnd2} that
\[
|\cum_r(\nu_T * f)| \ll_{r,s} \Big( \int_H \nu_T(B(h,c_{r,s} \gamma))^{r-1} \, d\nu_T(h)  + e^{-\delta_r \, \gamma} \Big) \, N_s(f)^r,
\]
which finishes the proof.

\section{Proof of Proposition \ref{prop_offdiagonal}}

We retain the notation from the previous section. In particular, an integer $r \geq 3$ has been fixed, and
we write $[r]$ for the set $\{1,\ldots,r\}$, and $\mathfrak{P}_{[r]}$ for the set of cyclically ordered partitions
of $[r]$. \\

Recall from Subsection \ref{subsec:rewrite} that if $\psi : 2^{[r]} \ra \bR$ with $\psi(\emptyset) =1$, 
then its cumulant $\cum_{[r]}(\psi)$ is defined by
\[
\cum_{[r]}(\psi) = \sum_{\cP \in \mathfrak{P}_{[r]}} (-1)^{|\cP|-1} \prod_{I \in \cP} \psi(I).
\]
We can extend $\psi$ to a function $\widetilde{\psi} : \mathfrak{P}_{[r]} \ra \bR$ by
\begin{equation}
\label{defextpsi}
\widetilde{\psi}(\cP) = \prod_{I \in \cP} \psi(I), \quad \textrm{for $\cP \in \mathfrak{P}_{[r]}$},
\end{equation}
so that 
$$
\cum_{[r]}(\psi) = \sum_{\cP\in \mathfrak{P}_{[r]}} (-1)^{|\cP|-1} \widetilde{\psi}(\cP).
$$
Finally, given a partition $\cQ$ of $[r]$, we set 
\begin{equation}
\label{defpsiQ}
\psi^{\cQ}(I) = \prod_{J \in \cQ} \psi(I \cap J) \qand \widetilde{\psi}^{\cQ}(\cP) = \prod_{I \in \cP} \psi^{\cQ}(I).
\end{equation}

\subsection{Estimating cummulants}

Recall that if $f \in \cA$ and $\underline{h} = (h_1,\ldots,h_r) \in H^r$, then $\psi_{f, \underline{h}} : 2^{[r]} \ra \bR$ is defined by
\[
\psi_{f,\underline{h}}(I) = \mu(\prod_{i \in I} h_i \cdot f), \quad \textrm{for $\emptyset \neq I \subset [r]$},
\]
and $\psi_{f,\underline{h}}(\emptyset) = 1$. Our first proposition asserts that in order to estimate $\cum_{[r]}(\psi_{f,\underline{h}})$ 
from above, it suffices to estimate all differences of the form 
$|\widetilde{\psi}_{f,\underline{h}}(\cP) -\widetilde{\psi}^{\cQ}_{f,\underline{h}}(\cP)|$, where $\cQ$ varies over all possible 
partitions of $[r]$ with at least two blocks. These differences will be estimated below using our assumption that the $H$-action on
$(X,\mu)$ is exponentially
mixing of all orders. 

\begin{proposition}
\label{prop_cond}
For any partition $\cQ$ of $[r]$ with $|\cQ| \geq 2$, $\underline{h} \in H^r$ and $f \in \cA$, 
\begin{equation}
\label{trickcum}
|\cum_{[r]}(\psi_{f,\underline{h}})| \ll_{r} \max
\left\{ |\widetilde{\psi}_{f,\underline{h}}(\cP) - \widetilde{\psi}^{\cQ}_{f,\underline{h}}(\cP)| :\, \cP \in \mathfrak{P}_{[r]} \right\}.
\end{equation}
\end{proposition}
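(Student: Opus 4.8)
The plan is to exploit a general algebraic identity for cumulants: if $\psi$ is replaced by its "$\cQ$-factorized" version $\psi^{\cQ}$, then the cumulant $\cum_{[r]}(\psi^{\cQ})$ vanishes whenever $\cQ$ is a non-trivial partition. This is the standard fact that cumulants vanish on "independent blocks" — here the block structure being $\cQ$. Concretely, I expect to check that
\[
\cum_{[r]}(\psi^{\cQ}) = \sum_{\cP \in \mathfrak{P}_{[r]}} (-1)^{|\cP|-1} \widetilde{\psi}^{\cQ}(\cP) = 0
\quad\text{whenever } |\cQ| \geq 2.
\]
Granting this, one writes
\[
\cum_{[r]}(\psi_{f,\underline{h}}) = \cum_{[r]}(\psi_{f,\underline{h}}) - \cum_{[r]}(\psi_{f,\underline{h}}^{\cQ}) = \sum_{\cP \in \mathfrak{P}_{[r]}} (-1)^{|\cP|-1}\big(\widetilde{\psi}_{f,\underline{h}}(\cP) - \widetilde{\psi}_{f,\underline{h}}^{\cQ}(\cP)\big),
\]
and the triangle inequality together with the bound $|\mathfrak{P}_{[r]}| \leq r!$ immediately gives \eqref{trickcum} with an implicit constant depending only on $r$.

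The main work is therefore proving the vanishing identity $\cum_{[r]}(\psi^{\cQ}) = 0$ for $|\cQ| \geq 2$. I would approach this combinatorially. Fix the partition $\cQ = \{J_1,\ldots,J_m\}$ with $m \geq 2$. For a cyclically ordered partition $\cP = \{I_1,\ldots,I_k\}$, we have $\widetilde{\psi}^{\cQ}(\cP) = \prod_{I \in \cP}\prod_{J \in \cQ}\psi(I \cap J)$, which is a product over the "refinement" cells $I \cap J$. The key observation is that this quantity depends only on the common refinement $\cP \wedge \cQ$ (as an unordered partition) — it does not see the cyclic order of $\cP$ at all. So I would group the sum $\sum_{\cP}(-1)^{|\cP|-1}\widetilde{\psi}^{\cQ}(\cP)$ according to the value of the unordered refinement $\cR = \cP \wedge \cQ$, and show that for each fixed $\cR$, the signed count $\sum_{\cP : \cP \wedge \cQ = \cR}(-1)^{|\cP|-1}$ vanishes. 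This inner count is a purely combinatorial (Möbius-type) statement about the poset of cyclically ordered partitions coarsening $\cR$ and compatible with $\cQ$; because $\cQ$ is non-trivial, there is always at least a two-element "interval" over which to run an alternating sum, forcing cancellation. An alternative, perhaps cleaner route: introduce the exponential generating function / formal variables so that $\cum_{[r]}$ becomes the genuine joint cumulant of random variables, use the classical fact that joint cumulants vanish when the index set splits into two mutually independent groups, and observe that $\psi^{\cQ}$ is exactly the set-function arising from making the blocks of $\cQ$ independent; then the vanishing is the textbook multilinearity-plus-independence property of cumulants.

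The step I expect to be the genuine obstacle is making the bookkeeping for "$\widetilde{\psi}^{\cQ}(\cP)$ depends only on $\cP \wedge \cQ$" and the ensuing sign cancellation fully rigorous in the \emph{cyclically ordered} setting, since the factor $(-1)^{|\cP|-1}$ and the cyclic-order equivalence interact in a way that is less symmetric than for ordinary (unordered) partitions. One has to be careful that the map $\cP \mapsto \cP \wedge \cQ$ and the associated fibers behave well with respect to the cyclic-order identification, and that the alternating sum over each fiber is taken over a set large enough to cancel — this is where the hypothesis $|\cQ| \geq 2$ is essential. If this turns out to be delicate, I would fall back on the probabilistic interpretation of cumulants (via the moment–cumulant formula over $\mathfrak{P}_{[r]}$ being equivalent to the usual one over all set partitions after symmetrization), reducing the identity to the standard independence-annihilation property of joint cumulants, and thereby sidestepping the direct combinatorics entirely.
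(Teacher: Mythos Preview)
Your overall strategy is exactly the paper's: reduce to the vanishing $\cum_{[r]}(\psi^{\cQ})=0$ for $|\cQ|\ge 2$ (this is Proposition~\ref{prop_condiszero}), then combine with the trivial bound \eqref{diff0} via the triangle inequality. The difference lies in how the vanishing is proved.

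The paper does not group the sum by fibers $\cP\wedge\cQ=\cR$ and it does not appeal to the probabilistic interpretation. Instead it writes $[r]=M\sqcup N$ with $M$ a single block of $\cQ$ and $N$ the union of the remaining blocks, and constructs an explicit sign-reversing involution $\tau$ on $\mathfrak{P}_{[r]}$ (Lemma~\ref{taumap}): fix $y_o\in M$, find the block $P_i$ containing $y_o$, walk forward in the \emph{cyclic} order to the first block $P_j$ meeting $N$, and either split $P_j$ into $P_j\cap M$, $P_j\cap N$ (if $P_j\cap M\neq\emptyset$) or merge $P_j$ with its predecessor $P_{j-1}\subset M$ (if $P_j\subset N$). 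This $\tau$ is an involution, changes $|\cP|$ by $\pm 1$, and preserves $\widetilde{\psi}^{\cQ}$ because $\psi^{\cQ}(I\sqcup J)=\psi^{\cQ}(I)\psi^{\cQ}(J)$ whenever $I\subset M$, $J\subset N$. The cancellation is then immediate.

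Your M\"obius/fiber approach is in principle viable, but---as you rightly anticipate---the cyclic ordering is the sticking point: the fibers $\{\cP:\cP\wedge\cQ=\cR\}$ inside $\mathfrak{P}_{[r]}$ do not obviously carry a clean poset structure on which the alternating sum cancels block by block. The paper's involution sidesteps this entirely by working natively with the cyclic order and by needing only the two-block decomposition $M\sqcup N$ rather than the full $\cQ$. Your probabilistic fallback would also work and is essentially the content of the reference the paper cites for this fact (Speed, \cite{speed}); the paper includes the involution argument for self-containedness.
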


Given this result, which will be established in Section \ref{sec:cond} below, Proposition \ref{prop_offdiagonal} follows immediately
from the following proposition, which will be established in Section \ref{sec:condest}.

\begin{proposition}[Estimating the effect of conditioning]
\label{prop_conditional_estimate}
Fix $0 \leq \alpha < \beta$ and an integer $s > s_r + r$. Then for any partition $\cQ$ of $[r]$,
$\underline{h} \in \Delta_{\cQ}(\alpha,\beta)$ and $f \in \cA$, 
\begin{equation}
\label{estimatediff}
 |\widetilde{\psi}_{f,\underline{h}}(\cP) - \widetilde{\psi}^{\cQ}_{f,\underline{h}}(\cP)|
\ll_{r,s}
e^{-(\beta \delta_r - r \alpha \sigma_s)} \, N_s(f)^r,
\end{equation}
where the implicit constant depends only on $r$ and $s$.
\end{proposition}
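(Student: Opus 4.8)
The plan is to fix a cyclically ordered partition $\cP \in \mathfrak{P}_{[r]}$, reduce the estimate to a block-by-block comparison of $\psi_{f,\underline{h}}(I)$ with $\psi^{\cQ}_{f,\underline{h}}(I)$, and settle the latter using exponential mixing of all orders. First I record the crude bounds $|\psi_{f,\underline{h}}(I)| \le \|f\|_{L^\infty}^{|I|}$ and $|\psi^{\cQ}_{f,\underline{h}}(I)| \le \|f\|_{L^\infty}^{|I|}$ (each factor $h_i\cdot f$ has $L^\infty$-norm $\|f\|_{L^\infty}$), which together with the Sobolev embedding \eqref{unibnd} give $\ll_{r,s} N_s(f)^{|I|}$. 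Fixing any enumeration $I_1,\ldots,I_k$ of the blocks of $\cP$ and telescoping,
\[
\widetilde{\psi}_{f,\underline{h}}(\cP) - \widetilde{\psi}^{\cQ}_{f,\underline{h}}(\cP) = \sum_{t=1}^{k} \Big(\prod_{u<t}\psi^{\cQ}_{f,\underline{h}}(I_u)\Big)\big(\psi_{f,\underline{h}}(I_t)-\psi^{\cQ}_{f,\underline{h}}(I_t)\big)\Big(\prod_{u>t}\psi_{f,\underline{h}}(I_u)\Big),
\]
so it suffices to prove $|\psi_{f,\underline{h}}(I)-\psi^{\cQ}_{f,\underline{h}}(I)| \ll_{r,s} e^{-(\beta\delta_r - r\alpha\sigma_s)}\,N_s(f)^{|I|}$ for each $I\subseteq[r]$; indeed the $t$-th summand is then $\ll_{r,s} e^{-(\beta\delta_r-r\alpha\sigma_s)}N_s(f)^{\sum_u|I_u|} = e^{-(\beta\delta_r-r\alpha\sigma_s)}N_s(f)^{r}$, and there are at most $r$ summands.

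For the single-block estimate, the partition $\cQ$ cuts $I$ into the nonempty pieces $I\cap J$, $J\in\cQ$; if there is just one such piece the two sides coincide, so assume there are $m\ge 2$. For each such $J$ choose $i_J\in I\cap J$ and, using that $\cA$ is an $H$-invariant algebra on which $H$ acts by algebra automorphisms, put $g_J := h_{i_J}^{-1}\cdot\prod_{i\in I\cap J}h_i\cdot f = \prod_{i\in I\cap J}(h_{i_J}^{-1}h_i)\cdot f\in\cA$, so that $\psi_{f,\underline{h}}(I) = \mu\big(\prod_{J}h_{i_J}\cdot g_J\big)$ while $\mu$-invariance gives $\prod_{J}\mu(g_J) = \psi^{\cQ}_{f,\underline{h}}(I)$. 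Since $i_J,i_{J'}$ lie in distinct blocks of $\cQ$ we have $d(h_{i_J},h_{i_{J'}}) \ge d_{\cQ}(\underline{h}) > \beta$ for $J\ne J'$, and $\delta_m\ge\delta_r$, so exponential mixing of order $m$ (Definition \ref{DEFmixk}) yields
\[
\big|\psi_{f,\underline{h}}(I) - \psi^{\cQ}_{f,\underline{h}}(I)\big| \ll_{r,s} e^{-\delta_r\beta}\prod_{J}N_{s'}(g_J),
\]
for $s' := s - r + 1$, which is $> s_r \ge s_m$ by the hypothesis $s > s_r + r$ (after harmlessly enlarging $s_r$ so that $(s_r)$ is nondecreasing). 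It remains to bound $N_{s'}(g_J)$: iterating almost-multiplicativity \eqref{Leibniz} over the $|I\cap J|\le r$ factors of $g_J$ raises the index by at most $r-1$, i.e. to level $\le s$, and then $H$-boundedness \eqref{grpbnd} contributes to each factor $e^{\sigma_\bullet\,d(h_{i_J}^{-1}h_i,e)} = e^{\sigma_\bullet\,d(h_i,h_{i_J})}$ with $d(h_i,h_{i_J})\le d^{\cQ}(\underline{h})\le\alpha$; monotonicity of $(\sigma_s)$ and \eqref{mono} let me replace every index appearing by $s$ and every $\sigma_\bullet$ by $\sigma_s$. Collecting exponents over all relevant $J$ (with $\sum_J|I\cap J| = |I|\le r$) gives $\prod_J N_{s'}(g_J)\ll_{r,s} e^{r\alpha\sigma_s}N_s(f)^{|I|}$, and multiplying by $e^{-\delta_r\beta}$ produces exactly the single-block bound.

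I expect the only genuine difficulty to be the bookkeeping of Sobolev indices: one must route the separation $d_{\cQ}(\underline{h}) > \beta$ into the exponent of the mixing estimate while keeping, after the Leibniz-rule losses (at most $r-1$ levels) and the $H$-boundedness factors (at worst $e^{r\alpha\sigma_s}$), the whole bound expressed through $N_s(f)$ with no larger exponent — which is precisely what forces the hypothesis $s > s_r + r$ and uses that $(\delta_r)$ is nonincreasing and $(\sigma_s)$ nondecreasing. Once the index arithmetic is pinned down, the telescoping identity and the summation over blocks of $\cP$ are routine.
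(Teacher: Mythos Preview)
Your proof is correct and takes essentially the same route as the paper: reduce via a product comparison over the blocks of $\cP$ to the single-block estimate (the paper's Lemma~\ref{lemma_estimate_I}), and prove the latter by grouping the factors in $I$ according to $\cQ$, applying exponential mixing to the reduced tuple $(h_{i_J})_J$, and controlling $N_{s'}(g_J)$ via iterated Leibniz and $H$-boundedness. The only cosmetic differences are that you telescope over $\cP$ whereas the paper expands $\prod_{I\in\cP}(A_I+B_I)-\prod_{I\in\cP}B_I$, and your index choice $s'=s-r+1$ is a harmless variant of the paper's; neither changes the argument.
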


\section{Proof of Proposition \ref{prop_cond}}
\label{sec:cond}

Throughout this section, let $r \geq 3$ be an integer, and write $[r]$ for the set $\{1,\ldots,r\}$. \\

If $\psi : 2^{[r]} \ra \bR$ is a set function with $\psi(\emptyset) =1$, recall the definition of its cumulant $\cum_{[r]}(\psi)$ from 
\eqref{defcumpsi}, and if $\cQ$ is a partition of $[r]$, recall the definition of the ``conditional'' set function
$\psi^{\cQ}$ from \eqref{defpsiQ}, and the definitions of the ``extended'' versions $\widetilde{\psi}$ and $\widetilde{\psi}^{\cQ}$
from \eqref{defextpsi}. It follows immediately from the definition of $\cum_{[r]}(\psi)$ that
\begin{equation}\label{diff0}
|\cum_{[r]}(\psi) - \cum_{[r]}(\psi^{\cQ})| \ll_r \max\left\{ |\widetilde{\psi}(\cP) - \widetilde{\psi}^\cQ(\cP)| \, : \, \cP \in \mathfrak{P}_{[r]} \right\}.
\end{equation}

We recall that the cumulant of random variables $X_1,\ldots, X_r$ vanish provided that
there exists a non-trivial partition $[r]=I\sqcup J$ such that $(X_i:\, i\in I)$
and $(X_j:\, j\in J)$ are independent (see, for instance, \cite[Lem.~4.1]{speed}).
The following proposition is a combinatorial version of this property.
It can be proved by modifying the argument from \cite{speed}
(see also Theorem 2 in \cite{APU}). We include a proof for completeness.

\begin{proposition}
\label{prop_condiszero}
For any partition $\cQ$ of $[r]$ with $|\cQ| \geq 2$ and $\psi : 2^{[r]} \ra \bR$, we have 
$$
\cum_{[r]}(\psi^{\cQ}) = 0.
$$
\end{proposition}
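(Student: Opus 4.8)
The plan is to recognize $\cum_{[r]}(\psi^{\cQ})$ as an ordinary joint cumulant of a moment function that factorizes over $\cQ$, and then to use the classical fact that such a cumulant must vanish because it ``mixes'' two or more independent blocks. First I would pass from cyclically ordered partitions to unordered ones: a set partition $\pi$ of $[r]$ into $k$ blocks admits exactly $(k-1)!$ distinct cyclic orders, and $\prod_{I \in \cP} \psi^{\cQ}(I)$ depends only on the underlying unordered partition, so
\begin{equation*}
\cum_{[r]}(\psi^{\cQ}) \;=\; \sum_{\pi \in \Pi_{[r]}} (-1)^{|\pi|-1}(|\pi|-1)! \prod_{B \in \pi} \psi^{\cQ}(B),
\end{equation*}
where $\Pi_{[r]}$ is the lattice of set partitions of $[r]$, the coefficient $(-1)^{|\pi|-1}(|\pi|-1)!$ being the M\"obius function $\mu_{\Pi_{[r]}}(\pi,\hat 1)$. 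In other words, $\cum_{[r]}(\psi)$ is the value at $[r]$ of the \emph{cumulant function} $\kappa$ associated to $\psi$ by M\"obius inversion over the partition lattice, i.e. the unique set function on subsets of $[r]$ with $\psi(I) = \sum_{\pi \in \Pi_I} \prod_{B \in \pi} \kappa(B)$ for every $I \subseteq [r]$.

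Next I would write down the cumulant function of $\psi^{\cQ}$ explicitly. Note $\psi^{\cQ}(\emptyset) = 1$, and if $B$ is contained in a single block $J$ of $\cQ$ then $\psi^{\cQ}(B) = \psi(B)$ (all other intersections are empty and contribute a factor $1$). I claim that the cumulant function of $\psi^{\cQ}$ is
\begin{equation*}
\kappa^{\cQ}(B) =
\begin{cases}
\kappa(B) & \text{if } B \subseteq J \text{ for some } J \in \cQ, \\
0 & \text{otherwise.}
\end{cases}
\end{equation*}
This is well defined since a set of size at least $2$ lies in at most one block of $\cQ$, and for singletons $\kappa$ and $\psi$ coincide. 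To check the defining relation for $\psi^{\cQ}$, fix $I \subseteq [r]$. In $\sum_{\pi \in \Pi_I} \prod_{B \in \pi} \kappa^{\cQ}(B)$ only partitions $\pi$ each of whose blocks is contained in some block of $\cQ$ survive, and these are exactly the partitions $\pi = \bigsqcup_{J \in \cQ} \pi_J$ with $\pi_J \in \Pi_{I \cap J}$; for such a $\pi$ every block lies in a unique $I \cap J$, so $\kappa^{\cQ}$ agrees with $\kappa$ on it, and hence
\begin{equation*}
\sum_{\pi \in \Pi_I} \prod_{B \in \pi} \kappa^{\cQ}(B) = \prod_{J \in \cQ}\Big( \sum_{\pi_J \in \Pi_{I \cap J}} \prod_{B \in \pi_J} \kappa(B) \Big) = \prod_{J \in \cQ} \psi(I \cap J) = \psi^{\cQ}(I),
\end{equation*}
using the defining relation for $\psi$ on each $I \cap J$. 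By uniqueness of M\"obius inversion, $\kappa^{\cQ}$ as above really is the cumulant function of $\psi^{\cQ}$.

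Finally, since $|\cQ| \geq 2$ the set $[r]$ is not contained in any single block of $\cQ$, so $\cum_{[r]}(\psi^{\cQ}) = \kappa^{\cQ}([r]) = 0$, which is the claim. The only genuinely fiddly step is the middle one: matching the bijection $\pi \leftrightarrow (\pi_J)_{J \in \cQ}$ and verifying that $\psi^{\cQ}$ restricted to subsets of a single block of $\cQ$ equals $\psi$, so that the two cumulant functions agree there; the passage to unordered partitions and the appeal to uniqueness of M\"obius inversion are standard. A more computational alternative would be to expand $\cum_{[r]}(\psi^{\cQ})$, group terms by the meet $\pi \wedge \cQ$, and show that for each $\rho \leq \cQ$ the coefficient $\sum_{\pi \wedge \cQ = \rho} (-1)^{|\pi|-1}(|\pi|-1)!$ vanishes via a complementation identity in $\Pi_{[r]}$, but the argument above seems cleaner.
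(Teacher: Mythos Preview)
Your proof is correct and takes a genuinely different route from the paper's. The paper argues by constructing an explicit sign-reversing involution $\tau$ on the set $\mathfrak{P}_{[r]}$ of cyclically ordered partitions: fixing a block $M \in \cQ$ and an element $y_o \in M$, one either splits or merges a suitable block of $\cP$ so that $|\tau(\cP)| = |\cP| \pm 1$ while $\widetilde{\psi}^{\cQ}$ is preserved; this forces $\cum_{[r]}(\psi^{\cQ}) = -\cum_{[r]}(\psi^{\cQ})$.

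Your argument instead passes to unordered partitions (correctly picking up the factor $(|\pi|-1)!$), recognises the resulting expression as the top value of the M\"obius-inverted ``cumulant function'' $\kappa$ associated to $\psi$, and then identifies $\kappa^{\cQ}$ explicitly: it agrees with $\kappa$ on sets lying inside a single block of $\cQ$ and vanishes otherwise. The verification via the bijection $\pi \leftrightarrow (\pi_J)_{J \in \cQ}$ between partitions of $I$ refining $\cQ$ and tuples of partitions of the $I \cap J$ is clean and correct, as is the observation that $\psi^{\cQ}(B) = \psi(B)$ whenever $B$ lies in a single block (using $\psi(\emptyset) = 1$). This is essentially the classical argument alluded to in the paper's reference to Speed; it is more conceptual and requires no ad hoc construction, at the cost of invoking uniqueness in M\"obius inversion over $\Pi_{[r]}$. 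The paper's bijective proof, by contrast, is entirely elementary and stays within the cyclically-ordered framework in which the cumulant was originally defined, avoiding any appeal to lattice theory.
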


We note that Proposition \ref{prop_cond} is a direct consequence of Proposition
\ref{prop_condiszero} and estimate \eqref{diff0}.

Let us briefly explain the driving mechanism in the proof of this proposition. Recall that $\mathfrak{P}_{[r]}$ denotes the set
of all cyclically ordered partitions of the set $[r]$. Let $\psi : 2^{[r]} \ra \bR$ be a set function and suppose that there exists a 
bijection $\tau : \mathfrak{P}_{[r]} \ra \mathfrak{P}_{[r]}$ such that 
\begin{equation}
\label{taumech}
|\tau(\cP)| = |\cP| + 1 \mod 2 \qand \widetilde{\psi}(\tau(\cP)) = \widetilde{\psi}(\cP),
\end{equation}
for all $\cP \in \mathfrak{P}_{[r]}$. Then,
\[
\cum_{[r]}(\psi) = 
\sum_{\cP \in \mathfrak{P}_{[r]}} (-1)^{|\tau(\cP)|-1} \widetilde{\psi}(\tau(\cP)) 
= 
- \sum_{\cP \in \mathfrak{P}_{[r]}} (-1)^{|\cP|-1} \widetilde{\psi}(\cP) = - \cum_{[r]}(\psi),
\]
and thus $\cum_{[r]}(\psi) = 0$. \\

The next lemma shows that one can produce, for every partition $\cQ$ of $[r]$ with at least
two partition elements, a bijection $\tau : \mathfrak{P}_{[r]} \ra \mathfrak{P}_{[r]}$ such that \eqref{taumech} holds for 
$\widetilde{\psi}^{\cQ}$, for every choice of set function $\psi : 2^{[r]} \ra \bR$. In particular, by the comment above, this shows 
that $\cum_{[r]}(\widetilde{\psi}^{\cQ}) = 0$, which finishes the proof of Proposition \ref{prop_cond}.

\begin{lemma}
\label{taumap}
For any partition $\cQ$ of $[r]$ with $|\cQ| \geq 2$, there exists a bijection $\tau : \mathfrak{P}_{[r]} \ra \mathfrak{P}_{[r]}$ such that 
\begin{itemize}
\item for every $\cP \in \mathfrak{P}_{[r]}$, we have $|\tau(\cP)| = |\cP| + 1$ mod $2$, and
\item for every $\psi : 2^{[r]} \ra \bR$, we have $\widetilde{\psi}^{\cQ} \circ \tau = \widetilde{\psi}^{\cQ}$.
\end{itemize}
\end{lemma}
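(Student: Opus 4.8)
The plan is to construct the involution $\tau$ explicitly by "toggling" a canonical element of $[r]$ into or out of a singleton block, chosen in a way that is invisible to $\widetilde{\psi}^{\cQ}$. Since $|\cQ| \geq 2$, pick two distinct blocks $Q_1, Q_2 \in \cQ$, and let $a = \min Q_1$ and $b = \min Q_2$; the key point is that $a$ and $b$ lie in different $\cQ$-blocks. Given a cyclically ordered partition $\cP = \{I_1,\dots,I_k\} \in \mathfrak{P}_{[r]}$, I would define $\tau(\cP)$ by the following dichotomy: if $a$ and $b$ sit in the same block $I_\ell$ of $\cP$, then \emph{split} $I_\ell$ into the two pieces $I_\ell \cap Q_1$-side containing $a$ and its complement in $I_\ell$ (inserting the new block immediately after, in cyclic order), thereby increasing $|\cP|$ by one; if $a$ and $b$ sit in different blocks, then \emph{merge} those two blocks (placing the merged block in the cyclic slot of the one containing $a$), decreasing $|\cP|$ by one. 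One must be careful that the splitting is done canonically — e.g.\ always split off the unique sub-block containing $b$ but not $a$ — so that $\tau$ is a genuine bijection and $\tau \circ \tau = \mathrm{id}$. This immediately gives the parity condition $|\tau(\cP)| = |\cP| + 1 \bmod 2$.

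The second step is to verify the invariance $\widetilde{\psi}^{\cQ}(\tau(\cP)) = \widetilde{\psi}^{\cQ}(\cP)$. Recall $\widetilde{\psi}^{\cQ}(\cP) = \prod_{I \in \cP} \prod_{J \in \cQ} \psi(I \cap J)$, so what matters is the \emph{multiset} of nonempty intersections $\{\, I \cap J : I \in \cP,\ J \in \cQ \,\}$. When $\tau$ merges two $\cP$-blocks $I', I''$ with $a \in I'$, $b \in I''$, the only intersections that can possibly change are those of the form $I' \cap J$ and $I'' \cap J$; after the merge these become $(I' \cup I'') \cap J = (I' \cap J) \sqcup (I'' \cap J)$. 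The crucial observation is that for every block $J \in \cQ$, at least one of $I' \cap J$, $I'' \cap J$ is empty — because if both were nonempty, $J$ would meet both $I'$ and $I''$, but the splitting/merging rule is designed so that the partition $\cP$ restricted to the "relevant region" never separates an element of a $\cQ$-block $J$ from another element of $J$ across the $I', I''$ divide. I will need to set up the splitting rule so that this separation property is built in: the canonical split along $a$ vs.\ $b$ must respect $\cQ$-blocks, i.e.\ each $\cQ$-block lands entirely on one side. Granting that, each intersection $I' \cap J$ (resp.\ $I'' \cap J$) either survives unchanged as $(I' \cup I'') \cap J$ or was empty to begin with, so the multiset of nonempty intersections is preserved, and hence so is $\widetilde{\psi}^{\cQ}$. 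The split case is the same identity read backwards.

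The main obstacle — and the place requiring genuine care rather than bookkeeping — is defining the split operation so that simultaneously (i) it is the exact inverse of the merge on its image, making $\tau$ an involution and in particular a bijection of all of $\mathfrak{P}_{[r]}$, and (ii) it respects the $\cQ$-block structure in the sense above, so that the intersection multiset is preserved. The natural fix is to not split by "$a$-side vs.\ $b$-side" directly but to use the $\cQ$-blocks themselves: from the block $I_\ell \ni a, b$, split off $I_\ell \setminus Q_{a}$ where $Q_a \in \cQ$ is the block containing $a$ — wait, this still may separate some other $\cQ$-block. A cleaner route, which I would ultimately pursue, is to restrict attention to how $\cP$ interacts with the two fixed blocks $Q_1 \ni a$ and $Q_2 \ni b$ only, treating the coarser partition $\cP \vee \cQ$ or an auxiliary marking; one toggles whether the (unique) $\cP$-block meeting $Q_1$ at $a$ and the one meeting $Q_2$ at $b$ are fused, doing the fusion/fission along the line $Q_1 \cup (\text{everything not in } Q_2) \ \text{vs.}\ Q_2$, which by construction respects every $\cQ$-block since each $J \in \cQ$ other than $Q_1$ is disjoint from $Q_1$ and each other than $Q_2$ is disjoint from $Q_2$. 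Once this bookkeeping is pinned down, the parity and invariance claims follow as above, completing the proof of the lemma and hence, via the mechanism \eqref{taumech}, of Proposition \ref{prop_condiszero} and Proposition \ref{prop_cond}.
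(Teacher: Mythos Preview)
Your reduction to a two-block partition $\{M,N\}$ (your ``$Q_1 \cup (\text{everything not in } Q_2)$ vs.\ $Q_2$'') is exactly the paper's first move, and the multiplicativity $\psi^{\cQ}(I \sqcup J) = \psi^{\cQ}(I)\psi^{\cQ}(J)$ for $I \subset M$, $J \subset N$ is the right invariant to exploit. But your toggle rule --- merge the $\cP$-block containing $a$ with the one containing $b$ --- has a genuine gap that your successive fixes do not close. Take $r=4$, $\cQ = \{\{1\},\{2,3,4\}\}$, $a=1$, $b=2$, and $\cP = (\{1,3\},\{2,4\})$. Then $a$ and $b$ lie in different blocks, so you merge to $(\{1,2,3,4\})$; but $\widetilde{\psi}^{\cQ}(\cP) = \psi(\{1\})\psi(\{3\})\psi(\{2,4\})$ while $\widetilde{\psi}^{\cQ}(\tau(\cP)) = \psi(\{1\})\psi(\{2,3,4\})$, so invariance fails. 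Your ``crucial observation'' that for every $J \in \cQ$ at least one of $I' \cap J$, $I'' \cap J$ is empty is simply false here: the block $I' = \{1,3\}$ containing $a$ already meets $N$. Moreover, applying your split rule to $(\{1,2,3,4\})$ along $M$ vs.\ $N$ gives $(\{1\},\{2,3,4\}) \neq \cP$, so $\tau$ is not an involution either. A further issue is that merging two non-adjacent blocks in a \emph{cyclically ordered} partition is not even well-defined without destroying the cyclic order of the remaining blocks.

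The paper's construction avoids all of this by abandoning the second anchor $b$ and instead using the cyclic order: fix $y_o \in M$, let $P_i$ be its block, and let $j$ be the \emph{first} index after $i$ (cyclically) with $P_j \cap N \neq \emptyset$. If $P_j \cap M \neq \emptyset$, split $P_j$ into $(P_j \cap M, P_j \cap N)$; if $P_j \cap M = \emptyset$, then necessarily $P_{j-1} \subset M$ (by minimality of $j$) and $P_j \subset N$, so merge $P_{j-1} \sqcup P_j$. The point is that the merge now always combines a block entirely inside $M$ with one entirely inside $N$, which is exactly what makes your ``crucial observation'' true and simultaneously makes the split the exact inverse. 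The cyclic order is what lets one locate such a pair canonically; fixed anchor points cannot.
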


\subsection{Proof of Lemma \ref{taumap}}

If $\cQ = (I_1,\ldots,I_n)$ is a cyclically ordered partition with $n \geq 2$, we set
\[
M = I_1 \qand N = \bigsqcup_{k=2}^n I_k,
\]
so that $[r] = M \sqcup N$, and we note if $\psi : 2^{[r]} \ra \bR$, then the function $\phi := \psi^{\cQ}$ satisfies
\begin{equation}
\label{dis}
\phi(I \sqcup J) = \phi(I) \, \phi(J), \quad \textrm{for all $I \subset M$ and $J \subset N$}.
\end{equation}
We shall use the decomposition $[r] = M \sqcup N$ to construct a bijection $\tau : \mathfrak{P}_{[r]} \ra \mathfrak{P}_{[r]}$ such that 
\[
|\tau(\cP)| = |\cP| + 1 \mod 2 \qen \widetilde{\phi} \circ \tau = \widetilde{\phi},
\]
for all cyclically ordered partitions $\cP$ of $[r]$ and all functions $\phi : 2^{[r]} \ra \bR$ which satisfy \eqref{dis}. \\

To this end, 
we choose once and for all an element $y_o \in M$. Given a cyclically ordered partition $\cP = (P_1,\ldots,P_k)$ of $[r]$, 
let $i$ be the unique index such that $y_o \in P_i$, and pick the first index $j$ following $i$ (in the cyclic ordering of 
$\cP$) such that $P_j \cap N \neq \emptyset$. We now set
\[
\tau(\cP) = 
\left\{
\begin{array}{ll}
(P_1,\ldots,P_{j-1},P_j \cap M,P_j \cap N,\ldots) & \textrm{if $P_j \cap M \neq \emptyset$}, \\
(P_1,\ldots,P_{j-2},P_{j-1} \sqcup P_j,P_{j+1},\ldots) & \textrm{if $P_j \cap M = \emptyset$.}
\end{array}
\right.
\]
Let $\phi : 2^{[r]} \ra \bR$ be a function which satisfies \eqref{dis}. If $P_j \cap M \neq \emptyset$, we see
that $|\tau(\cP)| = |\cP| + 1$ and by \eqref{dis},
\[
\widetilde{\phi}(\tau(\cP)) = \phi(P_1) \cdots \phi(P_j \cap M) \phi(P_j \cap N) \cdots \phi(P_k)
= \phi(P_1) \cdots \phi(P_k) = \widetilde{\phi}(\cP).
\]
If $P_j \cap M = \emptyset$, we see that $|\tau(\cP)| = |\cP| - 1$.
In this case, we observe that $P_j\subset N$ and $i<j$ because $y_o\in P_i$, so that $P_{j-1}\cap N=\emptyset$ and
$P_{j-1}\subset M$. Hence, by \eqref{dis},
\[
\widetilde{\phi}(\tau(\cP)) = \phi(P_1) \cdots \phi(P_{j-2}) \phi(P_{j-1} \sqcup P_j) \cdots \phi(P_k)
= \phi(P_1) \cdots \phi(P_k) = \widetilde{\phi}(\cP).
\]
It is clear that the map $\tau : \mathfrak{P}_{[r]} \ra \mathfrak{P}_{[r]}$ constructed in this manner
satisfies $\tau\circ\tau=\hbox{id}$, which finishes the proof.

\section{Proof of Proposition \ref{prop_conditional_estimate}}
\label{sec:condest}
\begin{lemma}[Estimating local effects of conditioning]
\label{lemma_estimate_I}
Fix $0 \leq \alpha < \beta$ and an integer $s > s_r + r$. Then, for any partition $\cQ$ of $[r]$,
$\underline{h} \in \Delta_{\cQ}(\alpha, \beta)$ and $f \in \cA$,
\[
|\psi_{f,\underline{h}}(I) - \psi^{\cQ}_{f,\underline{h}}(I)| \ll_{r,s} e^{-(\beta \delta_r - r \alpha \sigma_s)} \, N_s(f)^{|I|},
\quad
\textrm{for all $I \subset [r]$},
\]
where the implicit constant depends only on $r$ and $s$.
\end{lemma}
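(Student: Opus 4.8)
The plan is, for each block $J$ of $\cQ$ that meets $I$, to collapse the points $\{h_i : i \in I_J\}$ --- which all lie within distance $\alpha$ of one another since $\underline{h} \in \Delta_\cQ(\alpha,\beta)$ --- to a single representative, and then to read off the bound from a single application of exponential mixing. Write $I_J := I \cap J$ for $J \in \cQ$ and put $\cQ' := \{ J \in \cQ : I_J \neq \emptyset \}$. Since $\psi_{f,\underline{h}}(\emptyset) = 1$, one has $\psi^{\cQ}_{f,\underline{h}}(I) = \prod_{J \in \cQ'} \mu\big(\prod_{i \in I_J} h_i \cdot f\big)$, which coincides with $\psi_{f,\underline{h}}(I) = \mu\big(\prod_{i \in I} h_i \cdot f\big)$ whenever $|\cQ'| \le 1$; hence the difference vanishes unless $r' := |\cQ'| \in \{2,\dots,r\}$, which we now assume. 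For each $J \in \cQ'$ fix $i_J \in I_J$ and set $h_J := h_{i_J}$, $g_J := \prod_{i \in I_J} h_i \cdot f \in \cA$, and $\tilde g_J := h_J^{-1} \cdot g_J = \prod_{i \in I_J} (h_J^{-1} h_i) \cdot f \in \cA$ (using that $\cA$ is an $H$-invariant subalgebra). Then $g_J = h_J \cdot \tilde g_J$ and $\mu(\tilde g_J) = \mu(g_J)$ by measure-preservation, so $\psi_{f,\underline{h}}(I) = \mu\big( \prod_{J \in \cQ'} h_J \cdot \tilde g_J \big)$ while $\psi^{\cQ}_{f,\underline{h}}(I) = \prod_{J \in \cQ'} \mu(\tilde g_J)$.

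I would then apply exponential mixing of order $r'$ (Definition \ref{DEFmixk}) to the tuple $(\tilde g_J)_{J \in \cQ'}$ at the points $\underline{h}' := (h_J)_{J \in \cQ'}$, but at the \emph{lowered} index $m := s - r$: since $s$ is an integer with $s > s_r + r$, the number $m$ is a positive integer with $m > s_r \ge s_{r'}$ (we may assume $(s_r)$ non-decreasing in $r$). This yields
\[
\big| \psi_{f,\underline{h}}(I) - \psi^{\cQ}_{f,\underline{h}}(I) \big| \ll_{r,s} e^{-\delta_{r'} d_{r'}(\underline{h}')} \prod_{J \in \cQ'} N_m(\tilde g_J),
\]
where $d_{r'}(\underline{h}') = \min\{ d(h_J, h_{J'}) : J \neq J' \in \cQ' \}$. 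Since $i_J \in J$ and $i_{J'} \in J'$, we have $d(h_J, h_{J'}) \ge d_{J,J'}(\underline{h}) \ge d_\cQ(\underline{h}) > \beta$; combined with $\delta_{r'} \ge \delta_r > 0$ and $\beta \ge 0$, this bounds the exponential factor by $e^{-\delta_r \beta}$.

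It remains to estimate each $N_m(\tilde g_J)$. As $\tilde g_J$ is a product of $|I_J| \le r$ translates of $f$, iterating almost-multiplicativity \eqref{Leibniz} at most $r-1$ times and then applying monotonicity \eqref{mono} gives $N_m(\tilde g_J) \ll_{r,s} \prod_{i \in I_J} N_{s-1}\big( (h_J^{-1} h_i) \cdot f \big)$ --- the index here climbs only to $m + |I_J| - 1 \le s-1$, which is precisely what the choice $m = s-r$ buys us. For $i \in I_J$ one has $d(h_J^{-1} h_i, e) = d(h_i, h_J) \le d^{\cQ}(\underline{h}) \le \alpha$, so $H$-boundedness \eqref{grpbnd}, $\sigma_{s-1} \le \sigma_s$, and monotonicity give $N_{s-1}\big( (h_J^{-1} h_i) \cdot f \big) \ll_s e^{\sigma_s \alpha} N_s(f)$; hence $N_m(\tilde g_J) \ll_{r,s} e^{|I_J| \sigma_s \alpha} N_s(f)^{|I_J|}$. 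Multiplying over $J \in \cQ'$ and using $\sum_{J \in \cQ'} |I_J| = |I| \le r$ gives $\prod_{J \in \cQ'} N_m(\tilde g_J) \ll_{r,s} e^{r \sigma_s \alpha} N_s(f)^{|I|}$; substituting this together with $e^{-\delta_r\beta}$ into the displayed inequality yields exactly $e^{-(\beta \delta_r - r \alpha \sigma_s)} N_s(f)^{|I|}$, with all implicit constants either combinatorial in $r$ or arising from the structural assumptions at indices between $m$ and $s$, hence depending only on $r$ and $s$.

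I expect the only genuine obstacle to be the bookkeeping of the seminorm indices: both the mixing estimate and the Leibniz rule \eqref{Leibniz} cost regularity, so one must begin the mixing estimate at the lowered index $m = s - r$ rather than at $s$, so that the cumulative losses land back at an index $\le s$ and the growth exponent comes out as $\sigma_s$ rather than $\sigma_{s+O(r)}$; the hypothesis $s > s_r + r$ is exactly what makes this possible. Everything else is a routine composition of Definition \ref{DEFmixk} with the four structural axioms on $(N_s)$.
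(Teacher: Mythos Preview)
Your proof is correct and follows essentially the same approach as the paper's: both collapse each block $I\cap J$ to a single representative $h_{i_J}$, apply exponential mixing of order $|\cQ'|$ to the translated block-products $\tilde g_J$, and then estimate $N_{\bullet}(\tilde g_J)$ by iterating \eqref{Leibniz} and \eqref{grpbnd}. The only difference is bookkeeping --- you start mixing at the lowered index $m=s-r$ and let Leibniz climb back to $s-1$, whereas the paper starts at $s'$ and climbs to $s'+r=s$; these are the same substitution read in opposite directions.
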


\subsection{Proof of Proposition \ref{prop_conditional_estimate} assuming Lemma \ref{lemma_estimate_I}}

Fix a partition $\cP$ of $[r]$. Given $I \in \cP$, we set
\[
A_I = \psi_{f,\underline{h}}(I) - \psi_{f,\underline{h}}^{\cQ}(I) \qen B_I = \psi_{f,\underline{h}}^{\cQ}(I),
\]
so that we can write
\[
\widetilde{\psi}_{f,\underline{h}}(\cP) = \prod_{I \in \cP} (A_I + B_I)
\qand
\widetilde{\psi}_{f,\underline{h}}^{\cQ}(\cP) = \prod_{I \in \cP}  B_I.
\]
We claim that  
\begin{equation}
\label{prodineq}
\left| \prod_{I \in \cP} (A_I + B_I) - \prod_{I \in \cP} B_I \right| \leq 2^r C,
\end{equation}
where
\[
C = \max\left\{ \prod_{I \in S} \, \prod_{J \in T} A_I \, B_J \, : \, \cP = S \sqcup T, \enskip S \neq \emptyset \right\}.
\]
Indeed, if one expands the first product in \eqref{prodineq}, one ends up with $2^r$ terms, one of which equals the product of all of the
$B_I$'s. All other terms contains at least one $A_I$ for some $I \in \cP$ in them, and thus their absolute values are  trivially estimated from above by $C$. \\

By Lemma \ref{lemma_estimate_I}, we have for all $s > s_r + r$, 
\[
|A_I| \ll_{r,s} e^{-(\beta \delta_r - r \alpha \sigma_s)} N_s(f)^{|I|},
\]
and by \eqref{mono} and \eqref{unibnd},
\[
|B_I| \ll_s \prod_{J \in \cQ} \|f\|^{|I \cap J|}_{L^\infty} = \|f\|_{L^\infty}^{|I|} \ll_s N_s(f)^{|I|},
\]
for every $I \in \cP$. Hence,
\[
A_I B_J \ll_{r,s} e^{-(\beta \delta_r - r \alpha \sigma_s)} N_s(f)^{|I| + |J|}, \quad \textrm{for all $I, J \subset [r]$ with $I \neq \emptyset$}.
\]
Note that the bound in Proposition \ref{prop_conditional_estimate} is trivial (and useless) if $\beta \delta_r - r \alpha \sigma_s < 0$, so let us henceforth assume that $\beta \delta_r - r \alpha \sigma_s \geq 0$. We then get that
\[
|C| \ll_{r,s} e^{-(\beta \delta_r - r \alpha \sigma_s)} N_s(f)^{r},
\]
which finishes the proof.

\subsection{Proof of Lemma \ref{lemma_estimate_I}}

We assume that $0 \leq \alpha < \beta$ have been fixed once and for all, as well as a partition $\cQ$ of $[r]$, along with 
a subset $I \subset [r]$. Pick $f \in \cA$ and a tuple $\underline{h} = (h_1,\ldots,h_r) \in \Delta_{\cQ}(\alpha,\beta)$.
We recall that the latter means that
\begin{equation}
\label{abbounds}
d^{\cQ}(\underline{h}) \leq \alpha \qen d_{\cQ}(\underline{h}) > \beta,
\end{equation}
where $d^{\cQ}$ and $d_{\cQ}$ are defined in Subsection \ref{subsec:wellsep}. Let
\[
W_I = \big\{ J \in \cQ \, : \, I \cap J \neq \emptyset \big\},
\]
and choose, for every $J \in W_I$, an index $i_J \in I \cap J$. For every $J \in W_I$, we now set
\[
f_J = \prod_{j \in I \cap J} h_{i_J}^{-1}h_j \cdot f,
\]
and note that 
\[
\psi_{f,\underline{h}}(I \cap J) = \mu(h_{i_J} \cdot f_J) = \mu(f_J)
\qand
\psi_{f,\underline{h}}(I) = \mu(\prod_{J \in W_I} h_{i_J} \cdot f_J).
\]
In particular, by our convention that $\psi_{f,\underline{h}}(\emptyset) = 1$, 
\[
\psi_{f,\underline{h}}^{\cQ}(I) = \prod_{J \in \cQ} \psi_{f,\underline{h}}(I \cap J) = \prod_{J \in W_I} \mu(f_J).
\]
Since the action $H \acts (X,\mu)$ is assumed to be exponentially mixing of all orders, we conclude by \eqref{defmixk}
with $k = |W_I|$ and $\underline{h}^{W_I} = (h_J)_{J \in W_I}$, that for all $s' > s_k$, 
\begin{eqnarray}
\big| \psi_{f,\underline{h}}(I) - \psi^{\cQ}_{f,\underline{h}}(I) \big| 
&=&
\big|
\mu(\prod_{J \in W_I} h_{i_J} \cdot f_J) - \prod_{J \in W_I} \mu(f_J)
\big| \nonumber \\
&\ll_{k,s'} &
e^{-\delta_k d_k(\underline{h}^{W_I})} \, \prod_{J \in W_I} N_{s'}(f_J), \label{diff}
\end{eqnarray}
where 
\[
d_k(\underline{h}^{W_I}) = \min\big\{ d(h_J,h_{J'}) \, : \, J, J' \in W_I, \enskip J \neq J' \big\},
\]
and the implicit constants depend only on $k$ and $s'$. \\

Let us now estimate the norms $N_{s'}(f_J)$ for $J \in W_I$. We fix $J \in W_I$, and suppose that $I \cap J$ contains at least two elements so that we can write $I \cap J = J' \sqcup \{j\}$ for some $j \in I \cap J$ and $J' \subset I \cap J$. Then, by 
\eqref{Leibniz} and \eqref{grpbnd}, we have
\begin{align*}
N_{s'}(f_J) &= N_{s'}(f_{J'}  (h_{i_J}^{-1} h_j \cdot f)) \ll_{s'} 
N_{s'+1}(f_{J'})  N_{s'+1}(h_{i_J}^{-1} h_j \cdot f)\\
&\ll_{s'} 
\exp(\sigma_{s'+1} d(h_{i_J},h_j)) \, N_{s'+1}(f_{J'}) N_{s'+1}(f).
\end{align*}
If we iterate this argument as many times as there are elements in $I \cap J$, we arrive at the bound
\[
N_{s'}(f_J) \ll_{s'} \exp\left(|I \cap J| \sigma_{s'+|J|} d^{J}(\underline{h})\right) N_{s'+|J|}(f)^{|I \cap J|},
\]
where we used that $(\sigma_s)$ is increasing, and
thus, by \eqref{mono},
\begin{equation}
\label{prodNS}
\prod_{J \in W_I} N_{s'}(f_J) \ll_{s'} \exp\left( r \sigma_{s'+r} d^{\cQ}(\underline{h})\right)  N_{s'+r}(f)^{|I|},
\end{equation}
where $d^J$ and $d^{\cQ}$ are as in \eqref{defdupI} and \eqref{eq:dQ} respectively. Going back to 
\eqref{diff}, and using our assumption from Subsection \ref{standingass} that the sequences $(\sigma_s)$ and $(s_k)$ are increasing and that the 
sequence $(\delta_k)$ is decreasing, we conclude from \eqref{prodNS} that for all $s > s_r + r$, we have
\[
\big| \psi_{f,\underline{h}}(I) - \psi^{\cQ}_{f,\underline{h}}(I) \big| \ll_{r,s} e^{-(\delta_r d_k(\underline{h}^{W_I}) - r \sigma_{s} d^{\cQ}(\underline{h}))} \, N_{s}(f)^{|I|}.
\]
We have assumed that $\underline{h} \in \Delta_\cQ(\alpha,\beta)$, and thus 
\[
d^{\cQ}(\underline{h}) \leq \alpha \qen d_k(\underline{h}^{W_I}) \geq d_{\cQ}(\underline{h}) > \beta,
\]
whence 
\[
\delta_r d_k(\underline{h}^{W_I}) - r \sigma_{s} d^{\cQ}(\underline{h}) > \delta_r \beta - r \sigma_s \alpha,
\]
which finishes the proof.

\section{Proof of Proposition \ref{prop_exhaust}}

We retain the conventions and notations which were set up in Subsection \ref{subsec:wellsep}. In particular,
we fix an integer $r \geq 3$ throughout the section, and write $[r]$ for the set $\{1,\ldots,r\}$. 

\subsection{Passing to coarser partitions}

If $\cQ$ and 
$\cR$ are partitions of $[r]$, we say that $\cR$ is \textbf{coarser} than $\cQ$ if every partition element
in $\cR$ is a union of partition elements in $\cQ$, and \textbf{strictly coarser} if $\cR$ also has fewer
partition elements than $\cQ$. In other words, $\cR$ is strictly coarser than $\cQ$ if at least one partition
element in $\cR$ is the union of at least two partition elements from $\cQ$. In particular, the partition $\{[r]\}$ into one single block is strictly coarser
than any other partition of $[r]$, and every partition of $[r]$ with strictly less than $r$ partition elements is strictly
coarser than the partition $\cK_r$ of $[r]$ into points. \\

The following lemma summarizes the main inductive step in the proof of Proposition \ref{prop_exhaust}.

\begin{lemma}[Passing to coarser partitions]
\label{lemma_coarser}
Let $\cQ$ be a partition of $[r]$ with $|\cQ| \geq 2$. Fix $0 \leq \alpha < \beta$, and suppose that $\underline{h} \in H^r$ satisfies 
\[
d^{\cQ}(\underline{h}) \leq \alpha \qand d_{\cQ}(\underline{h}) \leq \beta.
\]
Then there exists a partition $\cR$ of $[r]$, strictly coarser than $\cQ$, such that 
$$
d^{\cR}(\underline{h}) < 3\beta.
$$
\end{lemma}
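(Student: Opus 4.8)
We are given $\underline{h} \in H^r$, a partition $\cQ$ of $[r]$ with $|\cQ|\ge 2$, and thresholds $0\le\alpha<\beta$ such that $d^{\cQ}(\underline{h})\le\alpha$ (every block of $\cQ$ has $d$-diameter $\le\alpha$) while $d_{\cQ}(\underline{h})\le\beta$ (\emph{some} pair of distinct blocks of $\cQ$ contains points at distance $\le\beta$). The idea is to merge exactly those blocks of $\cQ$ that are ``close'' to one another, and to show the resulting coarser partition $\cR$ has all blocks of diameter $<3\beta$. Concretely, I would define a graph $G$ on the vertex set $\cQ$ by joining two blocks $J,J'\in\cQ$ whenever $d_{J,J'}(\underline{h})\le\beta$, i.e.\ there exist $i\in J$, $j\in J'$ with $d(h_i,h_j)\le\beta$, and let $\cR$ be the partition of $[r]$ whose blocks are the unions $\bigsqcup_{J\in\cC}J$ over the connected components $\cC$ of $G$.

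\textbf{Strict coarseness.} Since $d_{\cQ}(\underline{h})\le\beta$, the graph $G$ has at least one edge, so at least one connected component of $G$ contains at least two blocks of $\cQ$. Hence the corresponding block of $\cR$ is a union of at least two blocks of $\cQ$, so $\cR$ is strictly coarser than $\cQ$ in the sense defined just before the lemma. (It is clear that $\cR$ is genuinely a partition of $[r]$, being the join of $\cQ$ with the component structure.)

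\textbf{Diameter bound.} Fix a block $R\in\cR$, say $R=\bigsqcup_{J\in\cC}J$ for a connected component $\cC$ of $G$, and take $i,j\in R$. Then $i\in J$ and $j\in J'$ for some $J,J'\in\cC$, and since $\cC$ is connected there is a path $J=J_0,J_1,\ldots,J_m=J'$ in $G$. I claim $d(h_i,h_j)<3\beta$. If $J=J'$ this is immediate from $d^{\cQ}(\underline{h})\le\alpha<\beta<3\beta$. Otherwise, the cleanest route is to observe that $G$ has diameter at most $\cdots$ — actually, rather than bounding the path length, I would argue directly: it suffices to show $d(h_i,h_j)<3\beta$ whenever $i,j$ lie in blocks joined by a \emph{single} edge, and then the general case follows because any two blocks in a component are connected, but a path of length $m$ would only give $\le (2m+1)\beta$, which is too weak for $m\ge 2$. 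So the honest argument must exploit that the bound $3\beta$ is uniform: the key point is that if $J_0,\dots,J_m$ is a path, pick for each consecutive pair $J_{k-1},J_k$ witnesses $a_k\in J_{k-1}$, $b_k\in J_k$ with $d(h_{a_k},h_{b_k})\le\beta$; within each block the diameter is $\le\alpha$, so going from $h_i$ to $h_j$ costs at most $\alpha$ (inside $J_0$) $+\,\beta$ (edge) $+\,\alpha+\beta+\cdots$. The main obstacle — and the step I expect to require care — is precisely this: a naive telescoping along a long path gives a bound growing with the path length, not a clean $3\beta$. The resolution is that one does \emph{not} need the $J$'s merged by transitivity; one should instead merge only blocks that are \emph{pairwise} within $\beta$ of a common block, or reconsider: in fact a single edge already gives $d(h_i,h_j)\le\alpha+\beta+\alpha\le 2\beta+\alpha$, and here $\alpha<\beta$, so $<3\beta$; and for the lemma it suffices to merge blocks one edge at a time and \emph{re-extract} $\cR$ so that no component has diameter exceeding this — i.e.\ one should take $\cR$ to be the partition obtained by merging, for a single chosen near-pair $(J,J')$ realizing $d_{\cQ}(\underline{h})\le\beta$, \emph{only} those two blocks, possibly together with any block $J''$ with $d_{J,J''}(\underline{h})\le\beta$ or $d_{J',J''}(\underline{h})\le\beta$; any $i,j$ in the merged block then lie within $\alpha+\beta+\alpha+\beta+\alpha < 3\beta$ using $\alpha<\beta$ (at most two edges through the hub $J$). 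I would therefore define $\cR$ by merging $J$ with the closed $\beta$-neighborhood of $J\cup J'$ in $G$ only, leaving all other blocks of $\cQ$ untouched; then every merged pair of points is reachable via at most two edges, giving $d(h_i,h_j)\le 5\alpha+2\beta$? — still not $3\beta$ unless $\alpha$ is small.

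\textbf{Cleanest correct version.} Rather than fight this, I would merge \emph{exactly two} blocks: let $J,J'\in\cQ$ realize $d_{\cQ}(\underline{h})\le\beta$, so there are $i_0\in J$, $j_0\in J'$ with $d(h_{i_0},h_{j_0})\le\beta$, and let $\cR$ be $\cQ$ with $J,J'$ replaced by $J\sqcup J'$ and all other blocks kept. Then $|\cR|=|\cQ|-1<|\cQ|$, so $\cR$ is strictly coarser; and for the diameter, the only new block is $R=J\sqcup J'$, for which any $i,j\in R$ satisfy, by the triangle inequality through $h_{i_0}$ and $h_{j_0}$, $d(h_i,h_j)\le d(h_i,h_{i_0})+d(h_{i_0},h_{j_0})+d(h_{j_0},h_j)\le \alpha+\beta+\alpha<3\beta$, using $d^{\cQ}(\underline h)\le\alpha<\beta$ so $\alpha+\beta+\alpha<\beta+\beta+\beta=3\beta$. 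All other blocks $I\in\cR$ already satisfy $d^I(\underline h)\le\alpha<3\beta$, hence $d^{\cR}(\underline h)<3\beta$, which is exactly the assertion. So the real content is just: merge one near-pair of blocks and telescope once; the ``$3\beta$'' is the generous slack that accommodates the two within-block detours of size $\le\alpha$ flanking the one cross-edge of size $\le\beta$.
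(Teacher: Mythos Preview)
Your final ``cleanest correct version'' is exactly the paper's proof: merge a single pair of blocks $J,J'$ realizing $d_{\cQ}(\underline h)\le\beta$, and bound the diameter of $J\sqcup J'$ by $\alpha+\beta+\alpha<3\beta$ via the triangle inequality through the witnesses $h_{i_0},h_{j_0}$. The preceding detours through connected components and multi-edge paths are unnecessary (and, as you yourself noted, do not yield the $3\beta$ bound), so you should simply delete them and present the two-block merge directly; one minor imprecision to clean up is that your displayed triangle inequality assumes $i\in J$ and $j\in J'$, whereas the cases $i,j$ in the same block (bounded by $\alpha$) and $i\in J',\,j\in J$ (symmetric) should be mentioned or covered by a ``without loss of generality''.
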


\subsection{Proof of Proposition \ref{prop_exhaust} assuming Proposition \ref{lemma_coarser}}

Let us fix a sequence 
\[
0 = \beta_o < \beta_1 < 3 \beta_1 < \beta_2 < 3 \beta_2 < \beta_3 < \ldots < \beta_{r-1} < 3 \beta_{r-1} < \beta_{r}.
\]
Pick an element $\underline{h} = (h_1,\ldots,h_r) \in H^r$. We wish to prove that either
\begin{itemize}
\item $\max_{i,j} d(h_i, h_j) \leq \beta_{r}$, or
\item there exist $0 \leq k < r-1$ and a partition $\cQ$ of $[r]$ with $|Q|\ge 2$ such that $\underline{h} \in \Delta_{\cQ}(3 \beta_k,\beta_{k+1})$.
\end{itemize}
This will be done in several steps. We first check whether $\underline{h} \in \Delta_{\cK_r}(0,\beta_1)$. If not, then
\[
d^{\cK_{r}}(\underline{h}) \leq 0 \qand d_{\cK_{r}}(\underline{h}) \leq \beta_1,
\]
and thus Lemma \ref{lemma_coarser} (applied to $\alpha = 0$ and $\beta = \beta_1$) implies that there exists a strictly coarser partition $\cQ_1$ than $\cK_{r}$ such that $d^{\cQ_1}(\underline{h}) \leq 3\beta_1$. If $|\cQ_1| = 1$ (that is, $\cQ_1=\{[r]\}$), then 
\[
\max_{i,j} d(h_i,h_j) \leq 3\beta_1 < \beta_r,
\]
and we are done, so let us assume that $|\cQ_1| \geq 2$. We now check whether $\underline{h} \in \Delta_{\cQ_1}(3\beta_1,\beta_2)$. If not, then 
\[
d^{\cQ_1}(\underline{h}) \leq 3\beta_1 \qand d_{\cQ_1}(\underline{h}) \leq \beta_2,
\]
and since $|\cQ_1| \geq 2$, Lemma \ref{lemma_coarser} (applied to $\alpha = 3\beta_1$ and $\beta = \beta_2$) 
implies that there exists a strictly coarser partition $\cQ_2$ than $\cQ_1$ such that 
$d^{\cQ_2}(\underline{h}) \leq 3\beta_2$. If $|\cQ_2| = 1$, then we again can conclude the argument as before, so we may assume that
$|\cQ_2| \geq 2$. 

If we continue like this, then we will have produced a chain $\cK_r,\cQ_1,\ldots,\cQ_m$ of strictly coarser partitions of 
$[r]$, which eventually must terminate at the trivial partition $\{[r]\}$ in no more than $r$ steps. At the $k$-th step, we check whether $\underline{h}$
belongs to $\Delta_{\cQ_k}(3\beta_k,\beta_{k+1})$. If this check fails for every $k$, then we conclude that
$\max_{i,j} d(h_i,h_j) \leq \beta_r$.

\subsection{Proof of Lemma \ref{lemma_coarser}}

Let $\cQ$ be a partition of $[r]$ with $|\cQ| \geq 2$ and $\underline{h} \in H^r$
satisfy $d^{\cQ}(\underline{h}) \leq \alpha$ and $d_{\cQ}(\underline{h}) \leq \beta$.
Since $|\cQ| \geq 2$, 
it follows from the second inequality that there exist atoms $I\ne J$ in $\cQ$
such that $d_{I,J}(\underline{h})\le \beta$.
We consider the partition
$\cR$ consisting of $I\cup J$ and $K\in \cQ\backslash \{I,J\}$ which is strictly coarser than $\cQ$.
Since $d_{I,J}(\underline{h})\le \beta$,
there exist $i_0\in I$ and $j_0\in J$ such that $d(h_{i_0},h_{j_0})\le \beta$.
Moreover, since $d^\cQ(\underline{h})\le \alpha$, we have 
$d^I(\underline{h})\le \alpha$ and 
$d(h_{i},h_{i_0})\le \alpha$ for all $i\in I$.
Similarly, we conclude that 
$d(h_{j},h_{j_0})\le \alpha$ for all $j\in J$. Hence, it follows that
for all $i\in I$ and $j\in J$,
$$
d(h_i,h_j)\le d(h_i,h_{i_0})+d(h_{i_0},h_{j_0})+d(h_{j_0},h_{j})\le \beta+2\alpha<3\beta.
$$
This proves that $d^{I\cup J}(\underline{h})< 3\beta$.
Additionally, for $K\in \cQ\backslash \{I,J\}$,
$$
d^{K}(\underline{h})\le d^{\cQ}(\underline{h})\le\alpha< 3\beta.
$$
Hence, we conclude that $d^{\cR}(\underline{h})< 3\beta$, as required.

\end{document}